\newcommand{\po}{\ar@{}[dr]|{\text{\pigpenfont R}}}
\newcommand{\pb}{\ar@{}[dr]|{\text{\pigpenfont J}}}
\numberwithin{equation}{section}
\numberwithin{figure}{section}
\theoremstyle{plain}
\newtheorem{thm}{Theorem}[section]
\newtheorem{lem}[thm]{Lemma}
\newtheorem{prop}[thm]{Proposition}
\newtheorem{cor}[thm]{Corollary}
\newtheorem{question}[thm]{Question}
\newtheorem{remark}[thm]{Remark}
\theoremstyle{definition}
\newtheorem{defn}[thm]{Definition}
\theoremstyle{remark}
\newtheorem*{rem}{Remark}
\title{The universal $n$-pointed surface bundle only has $n$ sections}
\author{Lei Chen}
\begin{document}
 \bibliographystyle{alpha}
\maketitle

\begin{abstract}
The classifying space BDiff$(S_{g,n})$ of the orientation-preserving diffeomorphism group of a surface $S_g$ of genus $g>1$ fixing $n$ points pointwise has a universal bundle \[
S_g \to \text{UDiff}(S_{g,n})\xrightarrow{u_{g,n}}\text{BDiff}(S_{g,n}).
\] 
The $n$ fixed points provide $n$ sections $\{s_i |1\le i\le n\}$ of $u_{g,n}$. In this paper we prove a conjecture of R. Hain that any section of $\pi$ is homotopic to some $s_i$. Let $\text{PConf}_n(S_g)$ be the space of ordered $n$-tuple of distinct points on $S_g$. As part of the proof of Hain's conjecture, we prove a result of independent interest: any surjective homomorphism $\pi_1(\text{PConf}_n(S_g))\to \pi_1(S_g)$ is equal to one of the forgetful homomorphisms $\{p_i:\pi_1(\text{PConf}_n(S_g))\to \pi_1(S_g)| 1\le i\le n\}$,  possibly post-composed with an automorphism of $\pi_1(S_g)$. We also classify sections of the universal hyperelliptic surface bundle.
	\end{abstract}
	
	\tableofcontents
\section{Introduction}

Let Diff$(S_{g,n})$ be the orientation-preserving diffeomorphism group of a surface $S_g$ of genus $g>1$ fixing $n$ distinct points $\{x_1,x_2,...,x_n\}\subset S_g$ pointwise. There is a fiber bundle
\begin{equation}
S_g \to \text{UDiff}(S_{g,n})\xrightarrow{u_{g,n}}\text{BDiff}(S_{g,n}),
\label{uni1}
\end{equation}
which is universal in the sense that any $S_g$-bundle endowed with $n$ disjoint sections is a pullback of this bundle. Since Diff$(S_{g,n})$ fixes the $n$ points $x_1,x_2,...,x_n$, we associate $n$ points on each fiber, i.e. n disjoint sections of (\ref{uni1}) which are denoted by $s_1,s_2,...,s_n$. A natural question is: are there more sections?

R. Hain conjectured that every section of (\ref{uni1}) is homotopic to one of these $n$ sections. This is the main theorem of this paper.

\begin{thm}[{\bf The classification of sections for ordered case}]
For $n\ge 0$ and $g>2$, every section of the universal bundle (\ref{uni1}) is homotopic to $s_i$ for some $i\in \{1,2,...,n\}$.  For $g=2$, there are precisely $2n$ homotopy classes of sections of the universal bundle (\ref{uni1}).
\label{main1}
\end{thm}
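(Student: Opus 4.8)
The plan is to translate the topological statement into a rigidity statement about splittings of group extensions, exploiting that every space in sight is aspherical. Since $g\ge 2$, the group $\text{Diff}_0(S_g)$ is contractible, so $\text{BDiff}(S_{g,n})$ is a $K(\text{Mod}(S_{g,n}),1)$ and the total space $\text{UDiff}(S_{g,n})$ is aspherical as well. Thus (\ref{universal}) yields a short exact sequence
\[
1\to \pi_1(S_g)\to \Pi\to \text{Mod}(S_{g,n})\to 1,\qquad \Pi:=\pi_1(\text{UDiff}(S_{g,n})),
\]
and homotopy classes of sections of $\pi$ are in bijection with $\pi_1(S_g)$-conjugacy classes of group-theoretic splittings $\sigma:\text{Mod}(S_{g,n})\to\Pi$, with each $s_i$ corresponding to a splitting $\sigma_i$. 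In parallel, the orbit map of the $\text{Diff}(S_g)$-action on ordered configurations realizes $\text{PConf}_n(S_g)$ as the homotopy fiber of $\text{BDiff}(S_{g,n})\to\text{BDiff}(S_g)$, giving the (generalized Birman) sequence $1\to \pi_1(\text{PConf}_n(S_g))\to \text{Mod}(S_{g,n})\to \text{Mod}(S_g)\to 1$; write $P:=\pi_1(\text{PConf}_n(S_g))$ and $\Gamma:=\pi_1(S_g)$.

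First I would restrict the section over this fiber. The composite $\text{PConf}_n(S_g)\hookrightarrow\text{BDiff}(S_{g,n})\to\text{BDiff}(S_g)$ is nullhomotopic, so the pullback of (\ref{universal}) over $\text{PConf}_n(S_g)$ is the trivial bundle $\text{PConf}_n(S_g)\times S_g$, whose $n$ tautological coordinate sections recover the $s_i$. Restricting $\sigma$ to the normal subgroup $P$ therefore lands in the preimage $q^{-1}(P)\cong P\times\Gamma$ (where $q:\Pi\to\text{Mod}(S_{g,n})$), and a splitting of $P\times\Gamma\to P$ is exactly the graph of a homomorphism $\phi:=f_*:P\to\Gamma$; here $s_i$ yields $\phi=p_i$, the $i$-th forgetful homomorphism. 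Conjugating $\sigma$ by $\gamma\in\Gamma$ replaces $\phi$ by $c_\gamma\circ\phi$, so $\phi$ is well-defined up to inner automorphisms of $\Gamma$.

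Next I would show $\phi$ is surjective and invoke the rigidity theorem. Because $\sigma$ is defined on all of $\text{Mod}(S_{g,n})$, conjugation by $\sigma(g)$ preserves $q^{-1}(P)=P\times\Gamma$, acts on $\Gamma$ by an automorphism whose outer class is the Dehn--Nielsen--Baer image of $\bar g\in\text{Mod}(S_g)$, and covers the conjugation $c_g$ on $P$; comparing coordinates shows $\phi$ intertwines the outer $\text{Mod}(S_g)$-action on $P$ with the outer $\text{Mod}(S_g)$-action on $\Gamma$. Consequently the image $\phi(P)\le\Gamma$ is invariant, up to conjugacy, under all of $\text{Mod}(S_g)=\text{Out}^+(\Gamma)$; since $\pi_1(S_g)$ has no proper nontrivial subgroup invariant under the full mapping class group up to conjugacy (seen on $H_1$ via irreducibility of the $\text{Sp}(2g,\mathbb Z)$-action, then promoted), $\phi$ is surjective. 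The assumed classification of surjections $\pi_1(\text{PConf}_n(S_g))\to\pi_1(S_g)$ then gives $\phi=\alpha\circ p_i$ for some $i$ and some $\alpha\in\text{Aut}(\Gamma)$. I expect this surjectivity/equivariance step to be the main obstacle: it is where the global nature of the section, and not merely its restriction to a single fiber, must be used.

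Finally I would pin down $\alpha$ and promote to all of $\text{Mod}(S_{g,n})$. The same intertwining forces the outer class of $\alpha$ to centralize $\text{Mod}(S_g)$ inside $\text{Out}(\Gamma)=\text{Mod}^{\pm}(S_g)$: for $g>2$ this centralizer is trivial, so $\alpha$ is inner and, after conjugating $\sigma$, $\phi=p_i$ exactly, i.e.\ $\sigma|_P=\sigma_i|_P$; for $g=2$ the centralizer is the $\mathbb Z/2$ generated by the (central) hyperelliptic involution, and applying the corresponding fiberwise involution to each $s_i$ produces a second family, yielding exactly $2n$ candidates. To upgrade $\sigma|_P=\sigma_i|_P$ to $\sigma=\sigma_i$, write $\sigma(g)=\tau(g)\sigma_i(g)$ with $\tau:\text{Mod}(S_{g,n})\to\Gamma$ a $1$-cocycle for the $\sigma_i$-twisted action; since $\tau|_P=0$ and $P$ acts on $\Gamma$ through the surjection $p_i$ by inner automorphisms, the cocycle identity forces $\tau(g)\in\Gamma^{P}=Z(\Gamma)=1$ for all $g$, whence $\tau\equiv e$ and $\sigma=\sigma_i$. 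Therefore $s\simeq s_i$ for $g>2$, and $s$ is homotopic to one of the $2n$ sections for $g=2$, completing the classification.
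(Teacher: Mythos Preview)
Your overall architecture matches the paper's: translate to splittings of the Birman-type extension, restrict to the point-pushing subgroup $P=PB_n(S_g)$ to get a homomorphism $\phi:P\to\Gamma=\pi_1(S_g)$, use $\text{Mod}_g$-equivariance to constrain $\phi$, then use the centralizer of $\text{Mod}_g$ in $\text{Out}(\Gamma)$ and a commutator/cocycle argument to upgrade $\sigma|_P=\sigma_i|_P$ to $\sigma=\sigma_i$. The last two steps are essentially identical to the paper's Case~3 and the subsequent ``conjugation is point-pushing'' discussion.

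The genuine gap is in your surjectivity step. The assertion that $\pi_1(S_g)$ has no proper nontrivial subgroup invariant under $\text{Mod}(S_g)$ up to conjugacy is simply false: any characteristic finite-index subgroup (for instance the kernel of $\Gamma\to H_1(S_g;\mathbb Z/m)$) is preserved by all of $\text{Aut}(\Gamma)$, hence certainly by $\text{Mod}(S_g)$, and such subgroups are surface groups $\pi_1(S_r)$ with $r>g$. Your $H_1$-irreducibility argument cannot detect these, since a finite-index subgroup surjects onto $H_1(S_g;\mathbb Q)$; nor does it handle finitely generated free subgroups lying in $[\Gamma,\Gamma]$. The vague ``then promoted'' does not close either hole. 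You also silently skip the possibility $\phi(P)=1$, which is exactly the $n=0$ statement (no section over $\text{BDiff}(S_g)$) and must be invoked separately.

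The paper does not try to prove surjectivity in one stroke. Instead it runs a case analysis on $\text{Im}(R)$ using the structure of finitely generated subgroups of $\Gamma$: the trivial case reduces to the known $n=0$ theorem; the surface-group case $\pi_1(S_r)$ with $r>g$ is excluded because part~(2) of the classification theorem forces $R$ to factor through some $p_i$, and there is no surjection $\pi_1(S_g)\to\pi_1(S_r)$; and the free-image case $F_h$ with $h\ge 1$ is excluded by a cohomological argument you did not anticipate---$R^*(H^1(F_h))$ is an \emph{isotropic} subspace of $H^1(\text{PConf}_n(S_g))=\bigoplus_i H^1_i$ which would have to be $\text{Mod}_g$-invariant, and the cup-product computation (the ``crossing element'' lemma) shows $\text{Mod}_g$ fixes no such subspace. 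So you need either part~(2) of the classification theorem together with the paper's isotropic-subspace argument, or an honest proof that no proper nontrivial \emph{finitely generated, infinite-index} subgroup of $\Gamma$ has $\text{Mod}(S_g)$-invariant conjugacy class (which is true but is not what $H_1$-irreducibility gives), plus a separate treatment of the finite-index and trivial cases.
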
 
Since each section $s_i$ has nontrivial self-intersection, we have the following corollary.
\begin{cor}
The universal bundle (\ref{uni1}) does not admit $n+1$ disjoint sections.
\label{nolift}
\end{cor}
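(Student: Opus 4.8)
The plan is to combine the homotopy classification of Theorem~\ref{main1} with an elementary intersection-theoretic obstruction. The numerical invariant I would attach to a section $s$ of $\pi$ is its self-intersection class $s^\ast e \in H^2(\mathrm{BDiff}(S_{g,n});\mathbb{Z})$, where $e = e(T^v)$ is the Euler class of the vertical tangent bundle $T^v = \ker(d\pi)$ of $\mathrm{UDiff}(S_{g,n})$. More generally, given two sections $s,s'$ I would define their intersection class as $(s,s')^\ast[\Delta] \in H^2(\mathrm{BDiff}(S_{g,n});\mathbb{Z})$, where $(s,s')\colon \mathrm{BDiff}(S_{g,n}) \to \mathrm{UDiff}(S_{g,n}) \times_{\mathrm{BDiff}(S_{g,n})} \mathrm{UDiff}(S_{g,n})$ and $[\Delta]$ is the class of the fiberwise diagonal, whose normal bundle is $T^v$. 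This class has three properties that drive the whole argument: it depends only on the homotopy classes of $s$ and $s'$, being the pullback of a fixed class along $(s,s')$; it vanishes whenever $s$ and $s'$ are disjoint, since then $(s,s')$ misses the support of $[\Delta]$; and it specializes to $(s,s)^\ast[\Delta] = s^\ast e$ when $s' \simeq s$.

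Granting these properties, I would argue by contradiction. Suppose $\pi$ admits $n+1$ pairwise disjoint sections $\sigma_1,\dots,\sigma_{n+1}$. For $g>2$, Theorem~\ref{main1} says each $\sigma_j$ is homotopic to some $s_{i(j)}$ with $i(j)\in\{1,\dots,n\}$, so by the pigeonhole principle two of them, say $\sigma_a$ and $\sigma_b$, are homotopic to the same $s_i$. Their intersection class vanishes because $\sigma_a,\sigma_b$ are disjoint, while homotopy invariance gives $(\sigma_a,\sigma_b)^\ast[\Delta] = s_i^\ast e$, which is nonzero by the cohomology computation cited before the statement. This contradiction settles the case $g>2$.

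For $g=2$ the scheme is the same, but Theorem~\ref{main1} now provides $2n$ homotopy classes: the $s_i$ together with their images $s_i'$ under the hyperelliptic involution. Pigeonhole on the index $i\in\{1,\dots,n\}$ still forces two of the $n+1$ disjoint sections to land in the same pair $\{s_i,s_i'\}$, so to conclude I need all three quantities $s_i^\ast e$, $(s_i')^\ast e$, and $(s_i,s_i')^\ast[\Delta]$ to be nonzero. The first two are the nonzero self-intersections; the remaining input is the computation that a section and its hyperelliptic twist necessarily meet, i.e. $(s_i,s_i')^\ast[\Delta]\neq 0$, which I would carry out by evaluating the Euler class against the homology of $\mathrm{BDiff}(S_{2,n})$ using the hyperelliptic structure.

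I expect the main obstacle to be making the intersection class rigorous in the infinite-dimensional classifying-space setting, where transversality and Poincar\'e duality are not directly available: the three properties above have to be established purely cohomologically, by realizing $[\Delta]$ as the image of a fiberwise Thom class and checking its naturality under pullback together with its vanishing on the complement of the diagonal. Once that formalism is in place the case $g>2$ is immediate, and the only genuine extra work is the $g=2$ intersection computation $(s_i,s_i')^\ast[\Delta]\neq 0$.
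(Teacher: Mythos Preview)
Your approach is exactly what the paper has in mind: the corollary is stated with no separate proof, immediately after the one-line remark that each $s_i$ has nontrivial self-intersection, and the implicit argument is precisely pigeonhole on Theorem~\ref{main1} plus this cohomological fact. You are in fact more careful than the paper about the $g=2$ case, where the extra hyperelliptic sections force you to check the additional nonvanishing $(s_i,s_i')^\ast[\Delta]\neq 0$; the paper does not spell this out.
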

What if we only fix the $n$ points as a set? More precisely, let Diff$(S_{g,\overline{n}})$ denote the orientation-preserving diffeomorphism group of a surface $S_g$ of genus $g>1$ fixing $n$ points $\{x_1,x_2,...,x_n\}\subset S_g$ as a set. There is a fiber bundle
\begin{equation}
S_g \to \text{UDiff}(S_{g,\overline{n}})\xrightarrow{u_{g,n}'} \text{BDiff}(S_{g,\overline{n}}).
\label{uni2}
\end{equation}
We also have the following result.
\begin{thm}[\bf No sections for unordered case]
For $n>1$ and $g>1$, surface bundle (\ref{uni2}) has no sections.
\label{main2}
\end{thm}
We see below that Hain's conjecture can be interpreted both in terms of mapping class groups and also in terms of moduli spaces. Let $\CMcal{M}_{g,m,n}$ be the moduli space of smooth Riemann surfaces of genus $g$ with $m+n$ distinct points, $m$ labelled and $n$ unlabelled. Earle-Kra \cite[Theorem 2.2]{EK} proved that the only holomorphic section of the forgetful map $f : \CMcal{M}_{g,m,n} \to \CMcal{M}_{g,m,0}$ occurs when $g=2$ and $n=6$. This section is constructed by marking all six Weierstrass points.

Corollary \ref{nolift} and Theorem \ref{main2} give a topological proof of the fact that there is no continuous section of $\CMcal{M}_{g,m+1,0}\to \CMcal{M}_{g,m,0}$ for $m\ge 0$ and there is no continuous section of $\CMcal{M}_{g,1,n}\to \CMcal{M}_{g,0,n}$ for $n>1$. Recently, we found out that Theorem \ref{main1} can be deduced from \cite[Theorem 1.1]{suoto}. Their proof substantially uses the tool of canonical reduction system. We provide a more elementary proof of this result.

When we talk about fundamental group in this paper, we omit the base point and that brings no ambiguity.

\subsection{The strategy of proof}
Let $\text{PConf}_n(S_g)$ be the space of ordered $n$-tuple of distinct points on $S_g$ and let $PB_n(S_g)=\pi_1(\text{PConf}_n(S_g))$. Let Mod$_{g,n}$ (resp. PMod$_{g,n}$) be the \emph{mapping class group} of $S_{g,n}$, i.e. the group of isotopy classes of orientation-preserving diffeomorphisms of $S_{g,n}$ fixing n punctures as a set (resp. pointwise). We omit $n$ when $n=0$.

We first translate the problem into a group-theoretical problem of determining a homomorphism $p$ satisfying the following diagram, where the horizontal exact sequences are the \emph{Birman exact sequences}.

  \begin{equation}
\xymatrix{
1 \to PB_n(S_g)\ar[r]\ar@{~)}[d]^R & \text{PMod}_{g,n} \ar[r]^{\pi_{g,n}}\ar@{~)}[d]^p& \text{Mod}_g\ar[r]\ar[d]^=&  1 \\
1 \to\pi_1(S_g)\ar[r] & \text{Mod}_{g,1}\ar[r]^{\pi_1}& \text{Mod}_g \ar[r]& 1.}              
\label{dia}
\end{equation}

The analysis of $p$ is decomposed into two parts: first classifying $R$ and then trying to extend $R$ to $p$. In the second part, we use the commutativity of diagram (\ref{dia}) and the action of Mod$_g$ on $\pi_1(S_g)$. In classifying $R$, we have the following key ingredient.

\subsection{The key ingredient}
The key ingredient is the following question. 
\begin{question}
How many homotopy classes of maps are there from $\text{PConf}_n(S_g)$ to $S_g$?\label{question}
\end{question}

Let $p_i:\text{PConf}_n(S_g)\to S_g$ be the projection onto the $i$th component. Let $p_{i*}:PB_n(S_g)\to \pi_1(S_g)$ be the map on the fundamental groups of $p_i$. Since $p_i$ does not fix a basepoint, the map $p_{i*}$ is only defined up to conjugacy. Do we have more maps?

\begin{rem}
The following figure is a cartoon version of what the following theorem talks about.
\end{rem}

\begin{figure}[!htb]
\minipage{0.45\textwidth}
  \includegraphics[width=\linewidth]{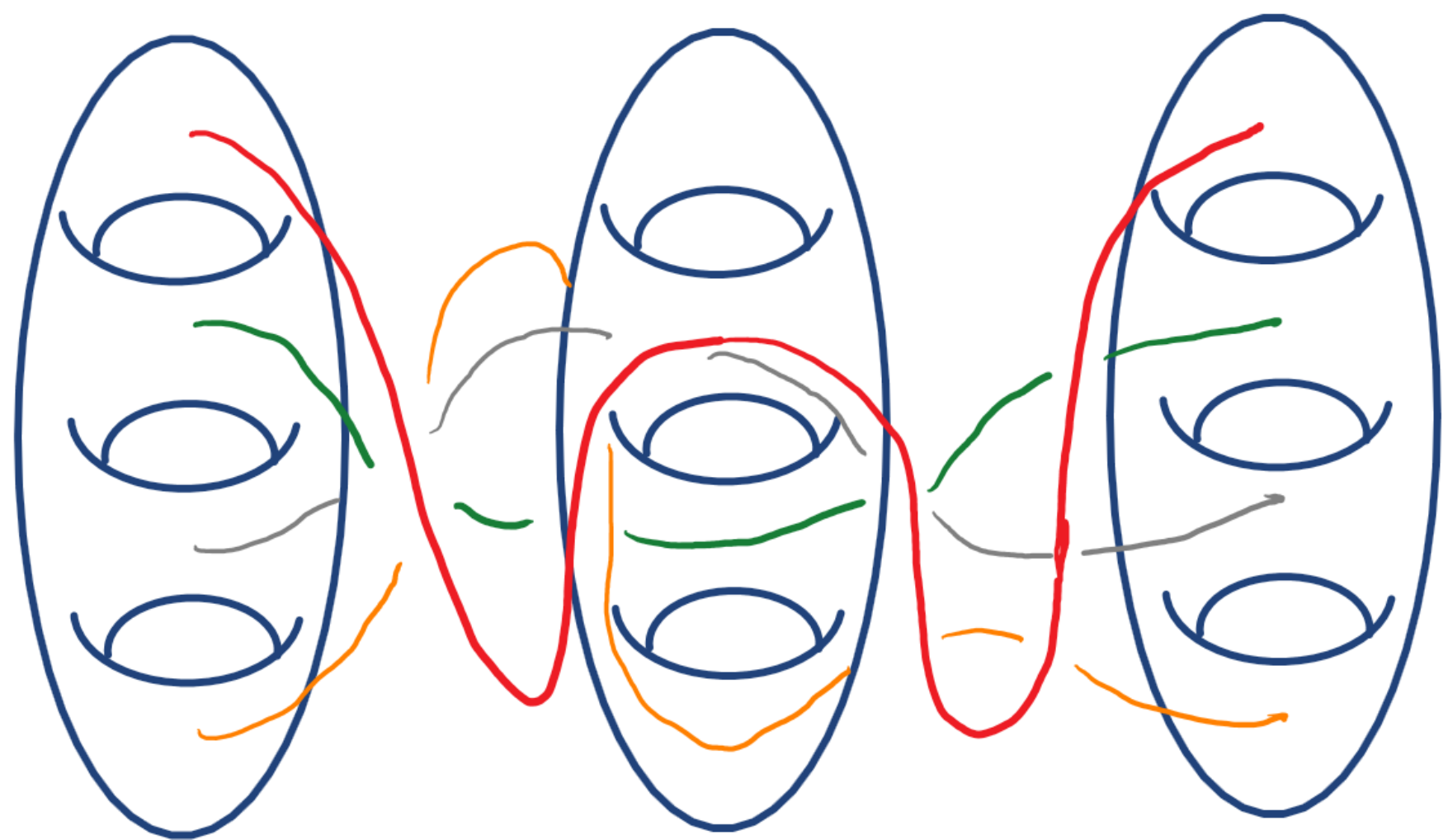}
\endminipage\hfill
  $\xrightarrow{\text{Forget}}$
\minipage{0.45\textwidth}
  \includegraphics[width=\linewidth]{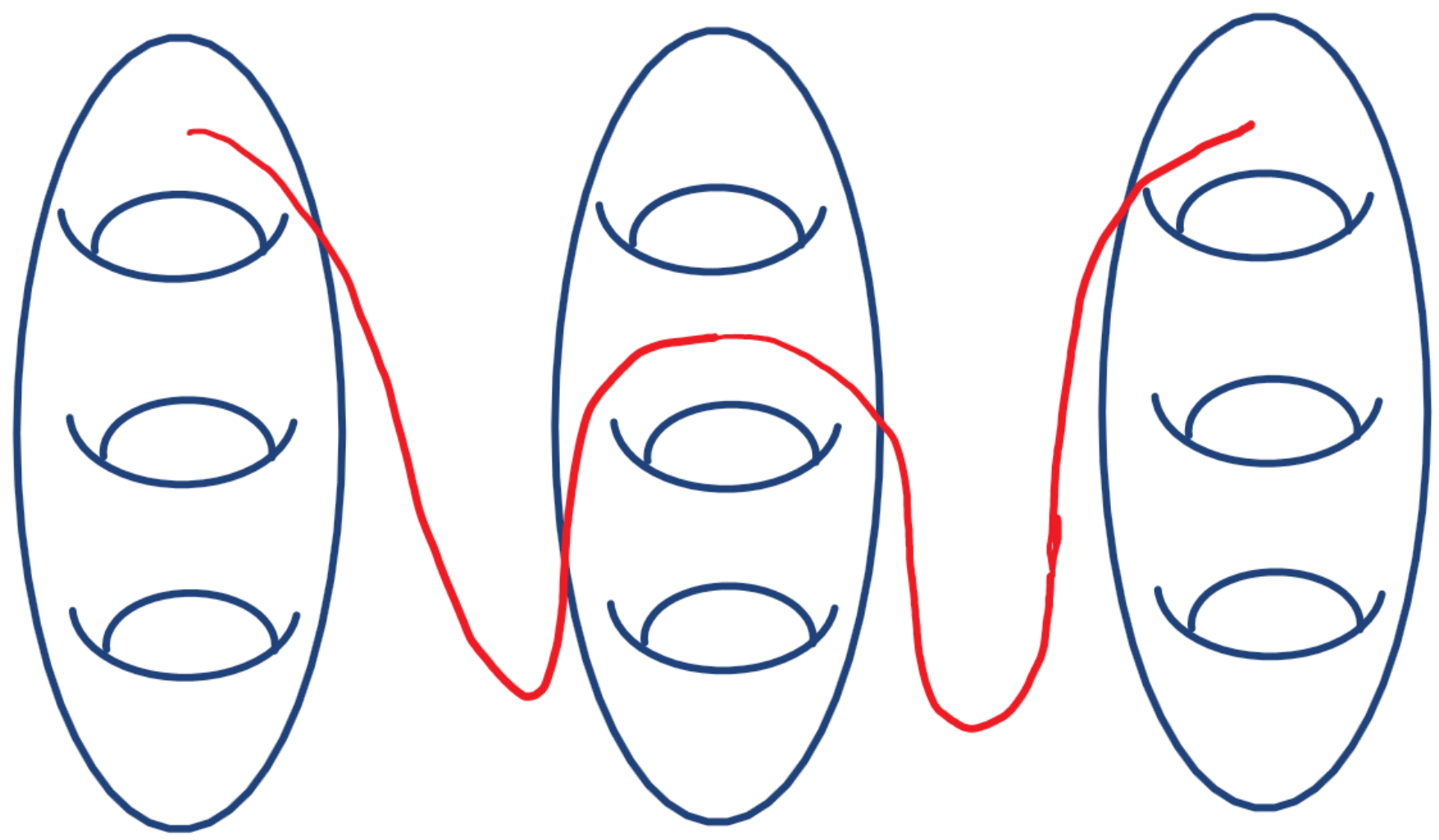}
\endminipage\hfill
\caption{A braid group homomorphism}
\end{figure}

We answer Question \ref{question} by the following classification theorem.

\begin{thm}[{\bf The classification of homomorphisms $PB_n(S_g)\to \pi_1(S_g)$}]
Let $g>1$ and $n>0$. Let $R: PB_n(S_g)\to \pi_1(S_g)$ be a homomorphism. The followings hold:\\
\\
1)If $R$ is surjective, then $R=A\circ p_{i*}$ for some $i$ and $A$ an automorphism of $\pi_1(S_g)$.\\ 
\\
2)If Image$(R)$ is not a cyclic group, the homomorphism $PB_n(S_g)\to \pi_1(S_g)$ factors through $p_{i*}$ for some $i$.
\label{sur}

\end{thm}
In our next paper \cite{lei3}, we classify the surjective homomorphisms between $PB_n(S_g)$ and $PB_m(S_g)$ for any $n$ and $m$. We also give a new proof of the result in \cite[Theorem 1]{AutPB} about the automorphism group of $PB_n(S_g)$.

\subsection{Other geometric applications}
It is a basic question to understand the classification of sections of a surface bundle. Theorem \ref{sur} has many geometric applications regarding the section problems. In this paper, we also deal with the case of the universal hyperelliptic surface bundle. This result is recently proved in \cite[Theorem 1]{Watanabe} as well. The genus 2 case in Theorem \ref{main1} is also part of the hyperelliptic case.\\
\\
\noindent
{\large\bf Acknowledgements}

The author would like to thank R. Hain for asking the question and providing the background for this problem. She would also like to thank Nick Salter for correcting the first draft of the paper and discussing the content. She thanks Paul Apisa, Jarred Sanders for comments on the paper and discussions on related problems. Lastly, she would like to extend her warmest thanks to Benson Farb for his extensive comments as well as his invaluable support from start to finish.

\section{The translation of the problem into a group-theoretical problem}
In this section, we translate the problem of finding a section of the universal surface bundle into a purely group-theoretical problem of finding homomophisms of mapping class groups. 
\subsection{The translation of the the section problem}
In this subsection, we will translate the problem of finding a section of a surface bundle into a purely group-theoretical problem of finding homomophisms of groups. 

Let Diff$(S_g)$ denote the orientation-preserving diffeomorphism group of a surface $S_g$ of genus $g>1$. We have the universal Diff$(S_g)$ principal bundle
\[
\text{Diff}(S_g)\to \text{EDiff}(S_g)\to \text{BDiff}(S_g).
\]
Here EDiff$(S_g)$ is the total space of the universal Diff$(S_g)$ bundle, i.e. a contractible principal Diff$(S_g)$ bundle. Let UDiff$(S_g)=\text{EDiff}(S_g)\times_{\text{Diff}(S_g)}S_g$ be the universal surface bundle
\[
S_g\to \text{UDiff}(S_g) \xrightarrow{u_g} \text{Diff}(S_g).
\]
$\text{BDiff}(S_g)$ classifies surface bundles, which means that any surface bundle $S_g\to E\to B$ is the pullback of $u_g$ via a continuous map $f_C: B\to \text{BDiff}(S_g)$.  Let Mod$_{g,n}$ (resp. PMod$_{g,n}$) be the \emph{mapping class group} (resp. \emph{pure mapping class group}) of $S_{g,n}$, i.e. the group of isotopy classes of orientation-preserving diffeomorphisms of $S_{g}$ fixing n punctures as a set (resp. pointwise). We omit $n$ when $n=0$.

Earle and Eells \cite[Theorem 1]{EE} says that Diff$_0(S_g)$, i.e. the identity component of Diff$(S_g)$, is contractible for $g>1$. Therefore we have $\text{BDiff}(S_g)=K(\text{Mod}_g,1)$. By the property of $K(\pi,1)$ space, $f: B\to \text{BDiff}(S_g)$ is determined by the monodromy representation
\[ f_*:\pi_1(B)\to \text{Mod}_g.\]

We have the following correspondence:
\newcommand{\pctext}[2]{\text{\parbox{#1}{\centering #2}}}
\begin{alignat}{2}
      \label{correspondence}
 \Biggl\{ \pctext{1.5in}{Conjugacy classes of representations $f: \pi_1(B)\to \text{Mod}_g$ }\Biggr\}                                                      
      \Longleftrightarrow \Biggl\{  \pctext{1.5in}{Isomorphism classes of oriented $S_g$-bundles over $B$} \Biggr\}.    
      \end{alignat}

Let $\text{Diff}(S_{g,1})$ be the orientation-preserving diffeomorphism group of a surface $S_g$ of genus $g>1$ fixing one point. There is a natural inclusion $\text{Diff}(S_{g,1})\hookrightarrow \text{Diff}(S_g)$.

\begin{prop}For $g>1$, 
\[
\text{UDiff}(S_g)=\text{BDiff}(S_{g,1}).
\]
\label{uni}
\end{prop}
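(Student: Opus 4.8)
The plan is to realize $S_g$ as a homogeneous space of $\text{Diff}(S_g)$ and then identify the resulting Borel construction with a classifying space. First I would fix the marked point $x_1\in S_g$ and consider the evaluation map $\text{ev}:\text{Diff}(S_g)\to S_g$, $\phi\mapsto \phi(x_1)$. Since $S_g$ is connected, $\text{Diff}(S_g)$ acts transitively on it, so $\text{ev}$ is surjective, and its stabilizer at $x_1$ is by definition exactly $\text{Diff}(S_{g,1})$. Using the isotopy extension theorem to produce local sections of $\text{ev}$ over coordinate charts, one upgrades $\text{ev}$ to a locally trivial fiber bundle
\[
\text{Diff}(S_{g,1})\to \text{Diff}(S_g)\xrightarrow{\text{ev}} S_g,
\]
which identifies $S_g$ with the homogeneous space $\text{Diff}(S_g)/\text{Diff}(S_{g,1})$.

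With this identification in hand, I would substitute it into the definition $\text{UDiff}(S_g)=\text{EDiff}(S_g)\times_{\text{Diff}(S_g)}S_g$ and invoke the general homeomorphism $E\times_G(G/H)\cong E/H$, valid for any (right) $G$-space $E$ and subgroup $H\le G$; concretely the assignment $[e,gH]\mapsto [eg]$ is well defined on equivalence classes and is a homeomorphism. Taking $E=\text{EDiff}(S_g)$, $G=\text{Diff}(S_g)$, and $H=\text{Diff}(S_{g,1})$ yields
\[
\text{UDiff}(S_g)=\text{EDiff}(S_g)\times_{\text{Diff}(S_g)}\big(\text{Diff}(S_g)/\text{Diff}(S_{g,1})\big)\cong \text{EDiff}(S_g)/\text{Diff}(S_{g,1}).
\]

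It then remains to recognize the right-hand side as a model for $\text{BDiff}(S_{g,1})$. Because $\text{EDiff}(S_g)$ is the total space of the universal $\text{Diff}(S_g)$-bundle, it is contractible and carries a free $\text{Diff}(S_g)$-action; restricting the action to the subgroup $\text{Diff}(S_{g,1})$ keeps it free, so $\text{EDiff}(S_g)\to \text{EDiff}(S_g)/\text{Diff}(S_{g,1})$ is a principal $\text{Diff}(S_{g,1})$-bundle with contractible total space. By uniqueness of classifying spaces this exhibits $\text{EDiff}(S_g)$ as a model for $\text{EDiff}(S_{g,1})$ and the quotient $\text{EDiff}(S_g)/\text{Diff}(S_{g,1})$ as $\text{BDiff}(S_{g,1})$, completing the argument.

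The one genuinely geometric point, and the step I expect to require the most care, is the first one: showing that the transitive action of the infinite-dimensional group $\text{Diff}(S_g)$ makes $\text{ev}$ a locally trivial bundle rather than merely a quotient map. For finite-dimensional Lie groups this is automatic, so here I would explicitly appeal to the isotopy extension theorem (equivalently, Palais-type local-section results for diffeomorphism-group actions) to supply the local sections; once that bundle structure is in place, every remaining step is a formal manipulation of Borel constructions and classifying spaces.
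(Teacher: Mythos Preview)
Your argument is correct and follows exactly the same route as the paper's proof: identify $S_g$ with $\text{Diff}(S_g)/\text{Diff}(S_{g,1})$, use the general isomorphism $E\times_G(G/H)\cong E/H$, and then observe that $\text{EDiff}(S_g)/\text{Diff}(S_{g,1})$ is a model for $\text{BDiff}(S_{g,1})$ because $\text{EDiff}(S_g)$ is contractible with a free $\text{Diff}(S_{g,1})$-action. Your version is in fact more careful than the paper's, since you explicitly note that the isotopy extension theorem is what supplies the local sections making $\text{ev}$ a fiber bundle.
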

\begin{proof}
\begin{align*}
\text{UDiff}(S_g)&= \text{EDiff}(S_g)\times_{\text{Diff}(S_g)}S_g && \text{By definition} \\
&= \text{EDiff}(S_g)\times_{\text{Diff}(S_g)} \text{Diff}(S_g)/\text{Diff}(S_{g,1}) && \text{Because } S_g=\text{Diff}(S_g)/\text{Diff}(S_{g,1}) \\
&= \text{EDiff}(S_g)/\text{Diff}(S_{g,1}) &&  \text{$\text{Diff}(S_{g,1})$ is a subgroup of Diff$(S_g)$}\\
&=\text{BDiff}(S_{g,1}). && \text{$\text{EDiff}(S_g)$ is contractible}
\end{align*}
\end{proof}
Proposition \ref{uni} implies that the universal surface bundle is
\begin{equation}
S_g\to K(\text{Mod}_{g,1},1)\to K(\text{Mod}_{g},1).
\label{UB}
\end{equation}
The fundamental groups of surface bundle (\ref{UB}) gives the following short exact sequence.
\begin{equation}
1\to \pi_1(S_g)\to \text{Mod}_{g,1}\to \text{Mod}_g\to 1.
\label{universal bundle}
\end{equation}

\begin{question}[\bf Section problems]
For a surface bundle $S_g\to E\xrightarrow{f} B$, denote by $\rho: \pi_1(B)\to \text{\normalfont Mod}_g$ the monodromy representation of $f$. The fundamental groups of surface bundle $f$ gives the following short exact sequence.
\begin{equation}
1\to \pi_1(S_g)\to \pi_1(E) \xrightarrow{f_*} \pi_1(B)\to 1.
\label{SP}
\end{equation}
How many splittings are there of exact sequence (\ref{SP})?
\end{question}

It is well-known that exact sequence (\ref{universal bundle}) has no splittings. This is $n=0$ case of Theorem \ref{main1}. The answer is no because of torsion, e.g. \cite[Corollary 5.11]{BensonMargalit}. The key fact is that there are noncyclic finite subgroups in Mod$_{g}$ but there does not exist noncyclic finite subgroups in Mod$_{g,1}$.

By the property of the pullback diagram, finding a splitting of $f_*$ is the same as finding a homomorphism $p$ making the following diagram commute, i.e. $\pi_1\circ p=\rho$.
\begin{equation}
\xymatrix{
\pi_1(E)\ar[r]\ar[d]^{f_*}& \text{Mod}_{g,1}\ar[d]^{\pi_{g,1}}\\
\pi_1(B)\ar[r]^{\rho}\ar[ru]^{p} & \text{Mod}_g.}
\label{CD}
\end{equation}

For a surface bundle $S_g\to E\xrightarrow{f} B$, we have the following correspondence:
\begin{alignat}{2}
      \label{correspondence2}
 \Biggl\{ \pctext{2in}{Homotopy classes of continuous sections of $S_g\to E\xrightarrow{f} B$}\Biggr\}                                                      
      \Longleftrightarrow \Biggl\{  \pctext{3in}{Homomorphisms $p$ satisfying diagram (\ref{CD}) up to an conjugacy by an element in Ker$(\pi_{g,1})\cong \pi_1(S_g)$} \Biggr\}.    
      \end{alignat}

\begin{remark}
The conjugation is needed here for the lack of base points on the spaces. The classification of homomorphisms of fundamental groups classifies continuous maps fixing a point. 
\end{remark}

\subsection{The translation of Theorem \ref{main1} and \ref{main2}}
In this subsection, we translate Theorem \ref{main1} and \ref{main2} into group-theoretic theorems. We also study the section problem for the universal hyperelliptic surface bundle.
\subsubsection{The mapping class groups}
In this subsection, we translate Theorem \ref{main1} and \ref{main2} into group-theoretical theorems.

Let Diff$(S_{g,n})$ denote the orientation-preserving diffeomorphism group of a surface $S_g$ of genus $g>1$ fixing $n$ distinct points $\{x_1,x_2,...,x_n\}\subset S_g$ pointwise. There is a fiber bundle
\begin{equation}
S_g \to \text{UDiff}(S_{g,n})\xrightarrow{u_{g,n}}\text{BDiff}(S_{g,n}),
\label{universal}
\end{equation}
which is universal in the sense that any $S_g$-bundle endowed with $n$ disjoint sections is a pullback of this bundle. Since Diff$(S_{g,n})$ fixes the $n$ points $x_1,x_2,...,x_n$, we associate $n$ points on each fiber, i.e. n disjoint sections of (\ref{universal}) which are denoted by $s_1,s_2,...,s_n$.

Let Diff$(S_{g,\overline{n}})$ denote the orientation-preserving diffeomorphism group of a surface $S_g$ of genus $g>1$ fixing $n$ points $\{x_1,x_2,...,x_n\}\subset S_g$ as a set. There is a fiber bundle
\begin{equation}
S_g \to \text{UDiff}(S_{g,\overline{n}})\xrightarrow{u'_{g,n}} \text{BDiff}(S_{g,\overline{n}}).
\label{universal2}
\end{equation}

Since Diff$_0(S_{g,n})$ and Diff$_0(S_{g,\overline{n}})$ are contractible by Earle and Eells \cite[Theorem 1]{EE}, we have that $\text{BDiff}(S_{g,n})=K(\text{PMod}_{g,n},1)$ and $\text{BDiff}(S_{g,\overline{n}})=K(\text{Mod}_{g,n},1)$.  

Let $\text{PConf}_n(S_g)$ be the space of ordered $n$-tuple of distinct points on $S_g$. There is a natural permutation group $\Sigma_n$-free action on $\text{PConf}_n(S_g)$. Let Conf$_n(S_g):=\text{PConf}_n(S_g)/\Sigma_n$ be the ordered $n$-tuple of distinct points on $S_g$. Let $PB_n(S_g):=\pi_1(\text{PConf}_n(S_g))$ and $B_n(S_g):=\pi_1(\text{Conf}_n(S_g))$ be the $n$-strand ordered and unordered \emph{surface braid groups}. We have the following \emph{Birman exact sequences} describing the monodromy representations of fiber bundle (\ref{universal}) and (\ref{universal2}).
\begin{equation}
1\to PB_n(S_g)\xrightarrow{\text{point pushing}} \text{PMod}_{g,n}\xrightarrow{\pi_{g,n}} \text{Mod}_g\to 1
\label{BES1}
\end{equation}
and
\begin{equation}
1\to B_n(S_g)\xrightarrow{\text{point pushing}}  \text{Mod}_{g,n}\xrightarrow{\pi'_{g,n}} \text{Mod}_g\to 1.
\label{BES2}
\end{equation}

Because of correspondence (\ref{correspondence2}), the classification of continuous sections of fiber bundle (\ref{universal}) and (\ref{universal2}) is the same as the classification of homomorphisms $p$ and $p'$ up to conjugacy that make the following diagrams (\ref{diagram1}) and (\ref{diagram2}) commute. 
  \begin{equation}
\xymatrix{
1 \to PB_n(S_g)\ar[r]\ar@{~)}[d]^R & \text{PMod}_{g,n} \ar[r]^{\pi_{g,n}}\ar@{~)}[d]^{p}& \text{Mod}_g\ar[r]\ar[d]^=&  1 \\
1 \to\pi_1(S_g)\ar[r] & \text{Mod}_{g,1}\ar[r]^{\pi_{g,1}}& \text{Mod}_g \ar[r]& 1}              
\label{diagram1}
\end{equation}
and
  \begin{equation}
\xymatrix{
1 \to B_n(S_g)\ar[r]\ar@{~)}[d]^{R'} & \text{Mod}_{g,n} \ar[r]^{\pi_{g,n}'}\ar@{~)}[d]^{p'}& \text{Mod}_g\ar[r]\ar[d]^=&  1 \\
1 \to\pi_1(S_g)\ar[r] & \text{Mod}_{g,1}\ar[r]^{\pi_{g,1}}& \text{Mod}_g \ar[r]& 1.}          
\label{diagram2}
\end{equation}
  
For $p$ and $p'$ satisfying diagrams (\ref{diagram1}) and (\ref{diagram2}), denote by $R$ and $R'$ the restrictions of $p$ and $p'$ on the subgroups $PB_n(S_g)$ and $B_n(S_g)$. Let $\text{PMod}_{g,n}\xrightarrow{p_{g,n,i}} \text{Mod}_{g,1}$ be the forgetful homomorphism that forgets the fixed points $\{x_1,...,\hat{x_i},...,x_n\}$. Theorem \ref{main1} is thus equivalent to the following Theorem.
\begin{thm}
For $g>2$ and $n\ge 0$, every homomorphism $p$ satisfying diagram (\ref{diagram1}) is conjugate to $p_{g,n,i}$ for some $i$ by an element $A$ in $\pi_1(S_g)$.
\label{main3}
\end{thm}

Theorem \ref{main2} is thus equivalent to the following.
\begin{thm}
For $g>1$ and $n>1$, there is no homomorphism $p'$ satisfying diagram (\ref{diagram2}).
\label{main4}
\end{thm}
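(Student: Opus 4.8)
The plan is to assume a homomorphism $p'$ fitting into diagram (\ref{diagram2}) exists and to derive a contradiction from the extra symmetry supplied by the permutation action, using the classification of Theorem \ref{sur} as the engine. Write $R' = p'|_{B_n(S_g)}$ for the restriction to the braid kernel and $R = R'|_{PB_n(S_g)}$ for its further restriction to the pure braid group. Since $PB_n(S_g)$ is normal in $B_n(S_g)$ with quotient $\Sigma_n$, the first thing I would record is a geometric lemma: for each pair $i\neq j$ the half-twist $\sigma_{ij}$ exchanging $x_i$ and $x_j$ inside a disk containing no other marked point maps to the identity in $\text{Mod}_g$, because a diffeomorphism supported in a disk is isotopic to the identity once the marked points are forgotten. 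Hence $\sigma_{ij}\in B_n(S_g)=\ker\pi_n'$ and the element $R'(\sigma_{ij})\in\pi_1(S_g)$ is defined.

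Next I would exploit conjugation-equivariance. For every $\beta\in PB_n(S_g)$ the identity $p'(\sigma_{ij}\beta\sigma_{ij}^{-1})=p'(\sigma_{ij})\,p'(\beta)\,p'(\sigma_{ij})^{-1}$ gives $R\circ c_{\sigma_{ij}}=\mathrm{Inn}_{R'(\sigma_{ij})}\circ R$, where $c_{\sigma_{ij}}$ denotes conjugation by $\sigma_{ij}$. Because inner automorphisms preserve kernels, $\ker R$ is normal and invariant under $c_{\sigma_{ij}}$. On the other hand $c_{\sigma_{ij}}$ realizes the transposition $(i\,j)$ on the strands, so it carries the point-pushing element $\beta_i$ (push $x_i$ once around a fixed loop $\gamma$ while the other points stay put) to a conjugate of $\beta_j$. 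These two observations are the whole source of the contradiction.

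I would then apply Theorem \ref{sur} to $R$. If $\mathrm{Image}(R)$ is non-cyclic, then by part (2) (or part (1) when $R$ is surjective) the map factors as $R=\phi\circ p_i$ for a single index $i$ and some $\phi:\pi_1(S_g)\to\pi_1(S_g)$ whose kernel is proper; thus $\ker R=p_i^{-1}(\ker\phi)$ singles out the $i$-th strand. Choosing $\gamma\notin\ker\phi$ gives $p_i(\beta_i)=\gamma\notin\ker\phi$, so $\beta_i\notin\ker R$, while $p_i(\beta_j)=1\in\ker\phi$, so $\beta_j\in\ker R$. Since $\ker R$ is normal and $c_{\sigma_{ij}}$-invariant and $c_{\sigma_{ij}}$ sends $\beta_i$ to a conjugate of $\beta_j$, we get $\beta_i\in\ker R\iff\beta_j\in\ker R$, a contradiction. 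This step uses only $g>1$ and $n>1$, which is precisely why the hypotheses match those of Theorem \ref{sur} rather than the stronger $g>2$ of Theorem \ref{main3}.

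The remaining case, where $\mathrm{Image}(R)$ is cyclic, is the one I expect to be the main obstacle. If the image is trivial then $R$ kills $PB_n(S_g)$, so $p'|_{\text{PMod}_{g,n}}$ descends to a splitting of $1\to\pi_1(S_g)\to\text{Mod}_{g,1}\to\text{Mod}_g\to 1$, which is impossible by the $n=0$ case (\cite{BensonMargalit}, Cor.~5.11). If the image is infinite cyclic I would pass to abelianizations: the swap identity (with the inner factor now acting trivially on the abelian image) makes the induced map $\overline{R}:H_1(PB_n(S_g))\to H_1(S_g;\mathbb{Q})$ equivariant for the symplectic action of $\text{Mod}_g$, so its image is a $\text{Mod}_g$-subrepresentation of the irreducible module $H_1(S_g;\mathbb{Q})$; having rank at most one it must vanish, which pushes $\mathrm{Image}(R)$ into the commutator subgroup. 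Excluding this last possibility cleanly, together with making the equivariance bookkeeping precise (tracking the inner-automorphism and basepoint factors in the swap identity), is where the genuine work lies; I expect to finish it with a further argument using the defining relations of the surface braid group, while everything else is formal once Theorem \ref{sur} is in hand.
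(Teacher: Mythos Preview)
Your swap argument for the non-cyclic case is correct and is genuinely different from the paper's route. The paper never touches half-twists: once the restriction $R=R'|_{PB_n(S_g)}$ has been identified (via the Section~4 case analysis) with $C_A\circ p_i$ for some $i$, the contradiction is obtained cohomologically. Namely, the lemma $H^1(\text{Conf}_n(S_g))\cong H^1(S_g)$ with image the diagonal in $\bigoplus_i H_i^1$ forces $(R')^*(H^1(S_g))$ to land in the diagonal, whereas $(C_A\circ p_i)^*(H^1(S_g))=H_i^1$. Your kernel-invariance argument under $c_{\sigma_{ij}}$ reaches the same endpoint with bare hands; it is more elementary, but you pay for it in the basepoint bookkeeping you flag.

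The genuine gap is the infinite cyclic case, and your sketch points the telescope in the wrong direction. Looking at the image of $\overline R$ inside $H_1(S_g;\mathbb{Q})$ only tells you the generator of $\mathrm{Im}(R)$ lies in $[\pi_1(S_g),\pi_1(S_g)]$, which is not a contradiction. The missing observation is dual: the surjection $PB_n(S_g)\twoheadrightarrow \mathrm{Im}(R)\cong\mathbb{Z}$ \emph{is} a nonzero class $\xi\in H^1(\text{PConf}_n(S_g);\mathbb{Q})$. The equivariance $R\circ c_e=\mathrm{Inn}_{p'(e)}\circ R$ (now for $e$ ranging over all of $\text{Mod}_{g,n}$, not just the half-twists) shows that $p'(e)$ restricts to an automorphism of the cyclic image, i.e.\ acts by $\pm 1$, so $e^*\xi=\pm\xi$ and the line $\mathbb{Q}\xi$ is $\text{Mod}_g$-invariant. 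But $\text{Mod}_g$ acts on $H^1(\text{PConf}_n(S_g))\cong\bigoplus_i H^1(S_g)$ diagonally via the symplectic representation, which fixes no line when $g>1$. This is exactly the paper's Case~2 argument (``$\text{Mod}_g$ fixes no isotropic subspace''), and it closes your gap without any further relation-chasing in $B_n(S_g)$.
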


\subsubsection{The hyperelliptic mapping class groups}
In this subsection, we translate the section problem of the hyperelliptic surface bundle into a group-theoretical statement.

Let $\tau$ be the involution as in the following figure.
\begin{figure}[H]
\centering
 \includegraphics[scale=0.3]{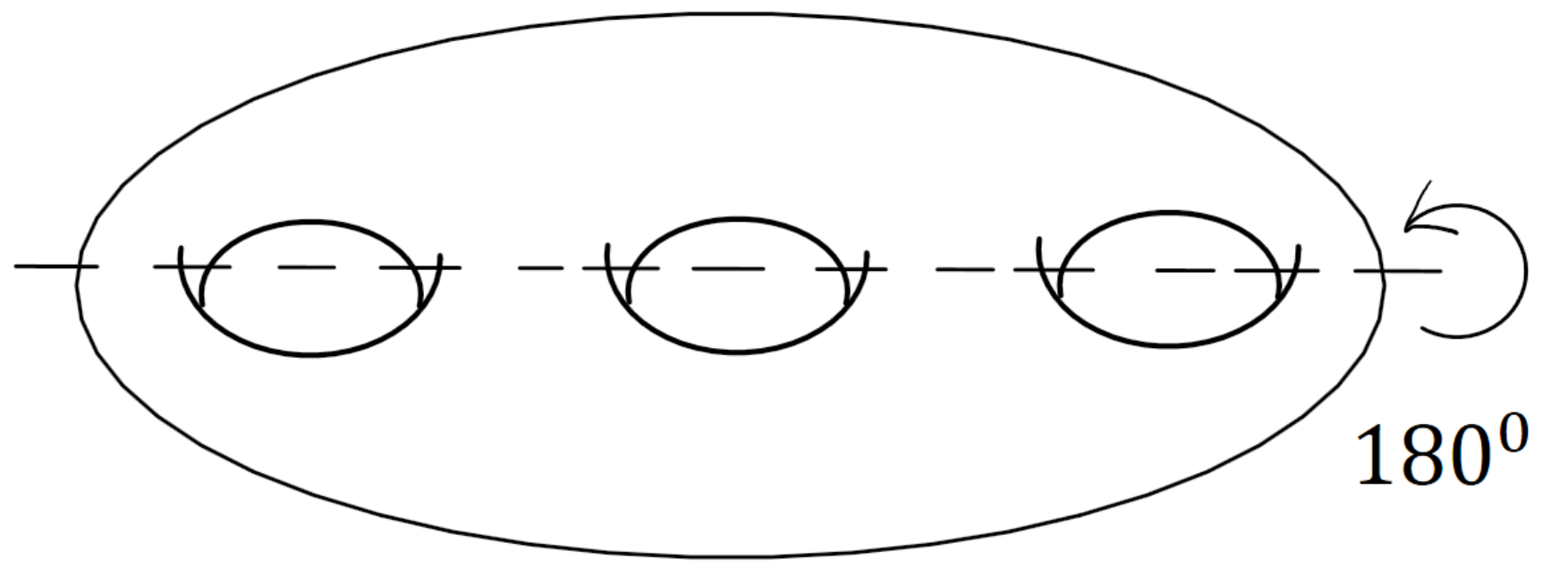}
 \caption{Hyperelliptic involution $\tau$ for $g=3$ case}
  \label{figure1}
\end{figure}

Let ${\cal H}_g$ be the \emph{hyperelliptic mapping class group}, i.e. the subgroup of Mod$_g$ consisting of all the mapping classes that are commutative with $\tau$. Denote by ${\cal H}_{g,n}$ (resp. ${\cal PH}_{g,n}$) the hyperelliptic mapping class group fixing $n$ points as a set (resp. pointwise), i.e. they satisfy the following pullback diagrams.
\[
\xymatrix{
{\cal H}_{g,n} \ar[r]\ar[d]   \pb &{\cal H}_g\ar[d]\\
\text{Mod}_{g,n} \ar[r] & \text{Mod}_g}
\qquad\text{and}\qquad
 \xymatrix{{\cal PH}_{g,n}\ar[r]\ar[d]   \pb &{\cal H}_g\ar[d]\\
\text{PMod}_{g,n}\ar[r] & \text{Mod}_g.}
\]

Let ${\cal BPH}_{g,n}=K({\cal PH}_{g,n},1)$ be the \emph{pure universal hyperelliptic space} fixing $n$ punctures pointwise and let 
\begin{equation}
S_g\to {\cal UPH}_{g,n}\xrightarrow{Hu_{g,n}}{\cal BPH}_{g,n}
\label{THB2}
\end{equation}
 be the \emph{pure universal hyperelliptic bundle}, i.e. the bundle that corresponds to the monodromy $\rho_{H,g,n}:{\cal PH}_{g,n}\to \text{PMod}_{g,n}$. Surface bundle (\ref{THB2}) classifies smooth $S_g$-bundle equipped with a $\tau$-action and $n$ unordered points on each fiber. For any section $s$, we could generate another section $t=\tau(s)$. Denote by $Hs_i$ the pullback of $s_i$ as a section of bundle (\ref{THB2}) and denote by $Ht_i$ their hyperelliptic conjugates.

Let ${\cal BH}_{g,n}=K({\cal H}_{g,n},1)$ be the \emph{universal hyperelliptic space} fixing $n$ punctures as a set and let 
\begin{equation}
S_g\to {\cal UH}_{g,n}\xrightarrow{Hu'_{g,n}} {\cal BH}_{g,n}
\label{THB1}
\end{equation}
 be the \emph{universal hyperelliptic bundle}, i.e. the bundle that corresponds to the monodromy $\rho_{H,g,n}': {\cal H}_{g,n}\to \text{PMod}_{g,n}$. Surface bundle (\ref{THB1}) classifies smooth $S_g$-bundle equipped with a $\tau$-action and $n$ unordered points on each fiber. We have the following classification of sections for bundles (\ref{THB2}) and (\ref{THB1}).

\begin{thm}[{\bf Hyperelliptic case}]

1) For $n\ge 0$ and $g>1$, every section of the universal hyperelliptic undle (\ref{THB2}) is homotopic to $Hs_i$ or $Ht_i$ for some $i\in \{1,2,...,n\}$.  \\
2) For $n>1$ and $g>1$, the universal hyperelliptic bundle (\ref{THB1}) has no sections.
\label{Hyper}
\end{thm} 
By correspondence (\ref{correspondence2}), we can translate Theorem \ref{Hyper} into the following group-theoretical statement. Let ${\cal PH}_{g,n} \xrightarrow{{H\pi_{g,n}}} {\cal H}_g$ and ${\cal H}_{g,n} \xrightarrow{{H\pi'_{g,n}}} {\cal H}_g$ be the forgetful maps forgetting the punctures. Let ${\cal H}_{g,n}\xrightarrow{Hp_{g,n,i}} \text{Mod}_{g,1}$ be the forgetful homomorphism forgetting the fixed points $\{x_1,...,\hat{x_i},...,x_n\}$. 

\begin{prop}
1) Every homomorphism $p$ satisfying the following diagram is either conjugate to the forgetful homomorphism $Hp_{g,n,i}$ by an element in ${\cal PH}_{g,n}$ or factors through $H\pi_{g,n}$, i.e. there exists $f$ such that $p=f\circ H\pi_{g,n}$.
\begin{equation}
\xymatrix{
1 \to PB_n(S_g)\ar[r]\ar@{~)}[d]^R & {\cal PH}_{g,n} \ar[r]^{H\pi_{g,n}}\ar@{~)}[d]^p& {\cal H}_g\ar[r]\ar[d]^{\rho_{H,g}}&  1 \\
1 \to\pi_1(S_g)\ar[r] &\text{\normalfont Mod}_{g,1}\ar[r]^{H\pi_{g,1}}& \text{\normalfont Mod}_g \ar[r]& 1.}         
\label{diagram11}
\end{equation}

2) For $n>1$, every homomorphism $p'$ satisfying the following diagram factors through $H\pi_{g,n}'$, i.e. there exists $f'$ such that $p'=f'\circ H\pi_{g,n}'$
  \begin{equation}
\xymatrix{
1 \to B_n(S_g)\ar[r]\ar@{~)}[d]^{R'} & {\cal H}_{g,n} \ar[r]^{H\pi_{g,n}'}\ar@{~)}[d]^{p'}& {\cal H}_g\ar[r]\ar[d]^{\rho_{H,g}}&  1 \\
1 \to\pi_1(S_g)\ar[r] &\text{\normalfont Mod}_{g,1}\ar[r]^{H\pi_{g,1}}&\text{\normalfont Mod}_g \ar[r]& 1.}    
\label{diagram21}
\end{equation}
\label{HP}
\end{prop}
\begin{proof}[\bf Proof of Theorem \ref{Hyper} assuming Proposition \ref{HP}]
By Proposition \ref{HP}, $p$ has the following two cases.\\
\\
{\bf Case 1: $p$ is conjugate to the forgetful homomorphism $Hp_{g,n,i}$ by an element $A\in {\cal PH}_{g,1}$.}

 By the commutativity of diagram (\ref{diagram11}), the mapping class $H\pi_{g,n}(A)$ is in the center of ${\cal H}_g$. Since Center$({\cal H}_g)=\big\langle \tau \big\rangle$, e.g. see \cite[Section 3.4 and Section 9.4]{BensonMargalit}, we have that ${\cal H}\pi_{g,n}(A)=1 \text{ or } \tau$, which represent section $Hs_i$ and $Ht_i$. \\
\\
 {\bf Case 2: $p$ factors throught $H\pi_{g,n}$.}

To prove the result, we only need to show that the exact sequence
\begin{equation}
1\to \pi_1(S_g)\to {\cal H}_{g,1}\to {\cal H}_g\to 1
\label{HU}
\end{equation}
does not split. The following finite order mapping class $\sigma$ is commutative with $\tau$.
\begin{figure}[H]
\centering
 \includegraphics[scale=0.3]{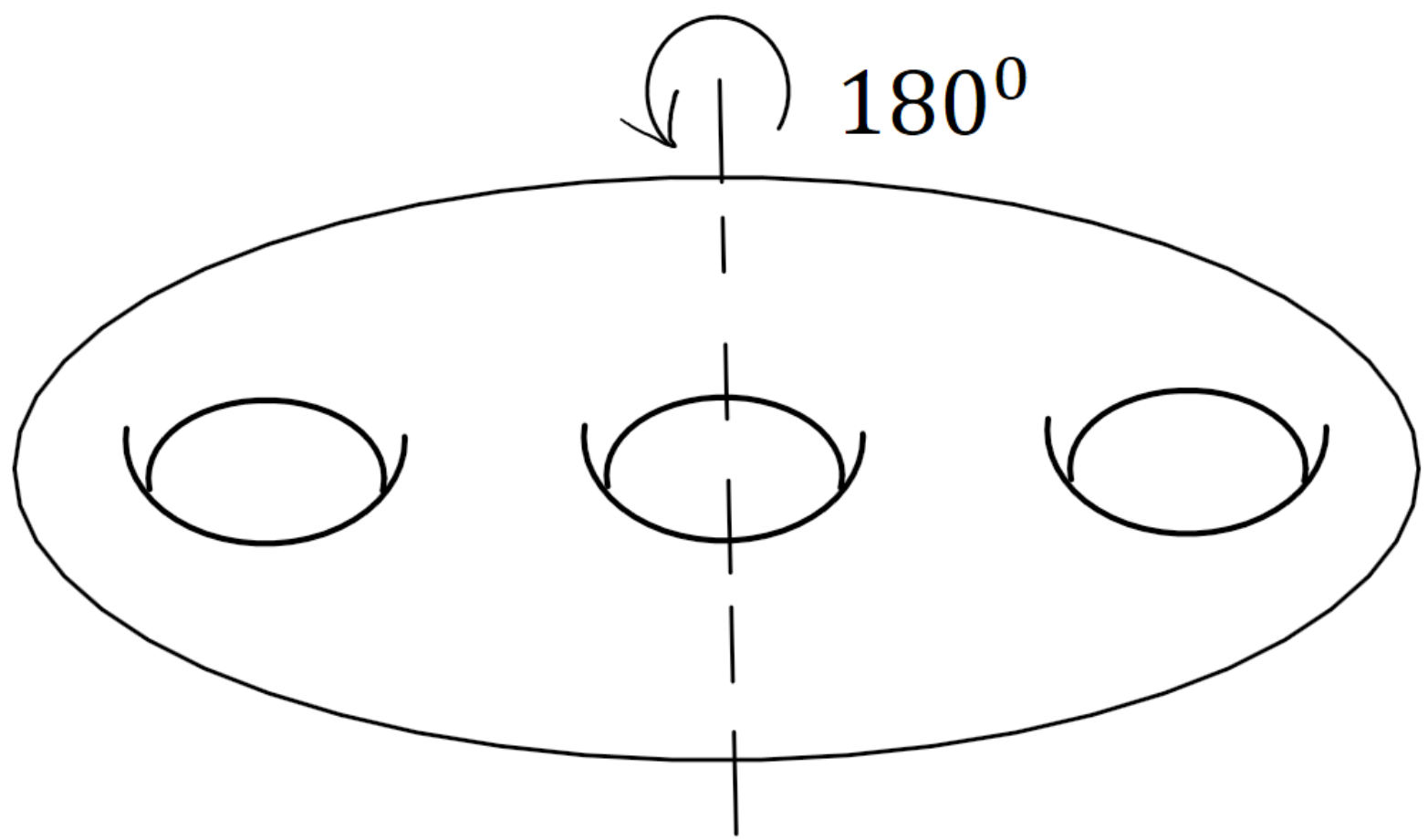}
 \caption{Torsion mapping class $\sigma$ for $g=3$ case}
  \label{figure1}
\end{figure}

In ${\cal H}_g$, mapping classes $\tau$ and $\sigma$ generate a $\mathbb{Z}/2\times \mathbb{Z}/2$ subgroup; this contradicts the fact that every finite subgroup of Mod$_{g,1}$ is cyclic. Therefore exact sequence (\ref{HU}) does not split.

\end{proof}

\section{The classification of homomorphisms $PB_n(S_g)\xrightarrow{R} \pi_1(S_g)$}
This section is divided into three parts. We first compute $H^*(\text{PConf}_n(S_g));\mathbb{Q})$, then study an algebraic property of $H^*(\text{PConf}_n(S_g));\mathbb{Q})$, Finally we use the computation and the property to prove Theorem \ref{sur}. The key idea is an argument of \cite{FEAJohnson} that there is some cohomological constraint on the existence of homomorphisms $PB_n(S_g)\xrightarrow{R} \pi_1(S_g)$. We assume throughout that $g>1$ and $n>0$.
\subsection{The computation of $H^*(\text{PConf}_n(S_g);\mathbb{Q})$}
In this subsection, we compute $H^*(\text{PConf}_n(S_{g,p});\mathbb{Q})$. Let $S_g^n$ be the product of $n$ copies of $S_g$. There is a natural embeddings $\text{PConf}_n(S_g)\subset S_g^n$. Let $p_i:\text{PConf}_n(S_g)\to S_g$ be the projection onto the $i$th component. Denote by $\triangle_{ij}\approx S_g^{n-1}\subset S_g^n$ be the $ij$th diagonal subspace of $S_g^n$, i.e. $\triangle_{ij}$ consists of points in $S_g^n$ such that the $i$th and $j$th coordinates are equal.. Let $H_i:=p_i^*H^1(S_g;\mathbb{Q})$ and let $[S_g]$ be the fundamental class in $H^2(S_g;\mathbb{Q})$. 
\begin{lem}
1) For $g>1$ and $n>0$,
\begin{equation}
H^1(\text{\normalfont PConf}_n(S_{g});\mathbb{Q})\cong H^1(S_{g}^n;\mathbb{Q})\cong \bigoplus_{i=1}^{n}H_i.
\label{EX1}
\end{equation}
2)We have an exact sequence

\begin{equation}
1\to \oplus_{1\le i<j\le n}\mathbb{Q}[G_{ij}] \xrightarrow{\phi} H^2(S_{g}^n;\mathbb{Q})\cong \bigoplus_{i=1}^{n}\mathbb{Q}p_i^*[S_g]\oplus \bigoplus_{i\neq j} H_i\otimes H_j\xrightarrow{Pr} H^2(\text{PConf}_n(S_{g});\mathbb{Q}),
\label{EX2}
\end{equation}
where $\phi(G_{ij})=[\triangle_{ij}]\in H^2(S_{g}^n;\mathbb{Q})$ is the Poincar\'e dual of the diagonal $\triangle_{ij}\subset S_g^n$.
\end{lem}
\begin{proof}
There is a graded-commutative $\mathbb{Q}$-algebra $[G_{ij}]$ defined in \cite[Theorem 1]{totaro}, where the degree of the generators $G_{ij}$ is 1. By Totaro \cite[Theorem 1]{totaro}, there is a spectral sequence $E_2^{p,q}=H^p(S_g^n;\mathbb{Q})[G_{ij}]^q$ converging to $H^*(\text{PConf}_n(S_g);\mathbb{Q})$. Since we only compute $H^1$ and $H^2$, the differential involved is $d_2:E_2^{0,1}=H^0(S_g^n;\mathbb{Q})[G_{ij}]\to E_2^{2,0}=H^2(S_g^n;\mathbb{Q})$. Let $[\triangle_{ij}]\in H^2(S_{g}^n;\mathbb{Q})$ be the Poincar\'e dual of $\triangle_{ij}\subset S_g^n$. By \cite[Theorem 2]{totaro}, the differential $d_2(G_{ij})=[\triangle_{ij}]$. All the isomorphisms in the lemma are coming from the K\"unneth formula.
\end{proof}

Let $\{a_k,b_k\}_{k=1}^g$ be a symplectic basis for $H^1(S_g;\mathbb{Q})$. For $1\le i,j \le m$, we denote 
\[M_{i,j}=\sum_{k=1}^{n} p_i^*a_k \otimes p_j^*b_k-p_i^*b_k\otimes p_j^*a_k.\]

The following lemma describes $[\triangle_{ij}]\in H^2(S_{g}^n;\mathbb{Q})\cong \bigoplus_{i=1}^{n}\mathbb{Q}p_i^*[S_g]\oplus \bigoplus_{i\neq j} H_i\otimes H_j$.
\begin{lem}
The diagonal element $[\triangle_{ij}]=p_i^*[S_{g}]+p_j^*[S_{g}]+M_{ij}\in \bigoplus_{i=1}^{n}\mathbb{Q}p_i^*[S_g]\oplus \bigoplus_{i\neq j} H_i\otimes H_j\cong H^2(S_{g}^n;\mathbb{Q})$. 
\end{lem}
\begin{proof}
This is classical. See \cite[Section 11]{CC}.
\end{proof}

\subsection{A property of the cup product structure of $H^*(\text{PConf}_n(S_g);\mathbb{Q})$}
In this subsection, we talk about a property of the cup product $H^1\otimes H^1\to H^2$ for PConf$_n(S_g)$. 
\begin{defn}
We call an element $x=(x^1,...,x^n) \in \bigoplus_{i=1}^{n}H_i=H^1(\text{PConf}_n(S_g);\mathbb{Q})$ a {\em crossing element} if $\#\{i:x^i\neq0\}>1$, i.e. $x\notin H_i$ for any $i$.
\end{defn}

\begin{lem}
Let $x=(x^1,...,x^n)$ and $y=(y^1,...,y^n)$ be two elements in $H^1(\text{PConf}_n(S_g);\mathbb{Q})$. Suppose that $x$ or $y$ is a crossing element. If $x\smile y=0\in H^2(\text{PConf}_n(S_g);\mathbb{Q})$, then $x$ and $y$ are proportional, i.e. $\lambda x=\mu y$ for some constants $\lambda\in \mathbb{Q}$ and $\mu\in \mathbb{Q}$.
\label{crossing}
\end{lem}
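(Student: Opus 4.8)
The plan is to convert the single relation $xy=0$ into one tensor equation per pair of indices, and then read off proportionality from these equations using the crossing hypothesis. First I would expand the cup product through the map $C_n$. Writing $H^1(\text{PConf}_n(S_g))=\bigoplus_i H^1_i$ and using that classes in $H^1_i$ and $H^1_j$ anticommute, the component of $xy$ in $H^1_i\otimes H^1_j$ is $x^i\otimes y^j-y^i\otimes x^j$ and its component along $p_i^*[S_g]$ is $\langle x^i,y^i\rangle$, so
\[
xy=C_n\Big(\sum_i \langle x^i,y^i\rangle\, p_i^*[S_g]+\sum_{i<j}\big(x^i\otimes y^j-y^i\otimes x^j\big)\Big).
\]
Since $xy=0$, the argument lies in $\ker C_n$, which by the kernel description established above is spanned by the relations $R_{i,j}=p_i^*[S_g]+p_j^*[S_g]-M_{i,j}$. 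Writing it as $\sum c_{ij}R_{i,j}$ and projecting onto the summand $H^1_i\otimes H^1_j$ (the only relation meeting this summand is $R_{i,j}$, contributing $-M_{i,j}$) gives, for every pair,
\[
x^i\otimes y^j-y^i\otimes x^j=-c_{ij}M_{i,j}.
\]

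The crucial input is a rank count. Regard both sides as elements of $H^1(S_g)\otimes H^1(S_g)$; the left-hand side is a difference of two decomposable tensors, hence of rank at most $2$, whereas $M_{i,j}$ is the tensor of the symplectic form and so has rank $2g$. Because $g>1$ we have $2g>2$, and therefore $c_{ij}M_{i,j}$ can have rank $\le 2$ only when $c_{ij}=0$. This is exactly where the hypothesis $g>1$ is needed, and it forces
\[
x^i\otimes y^j=y^i\otimes x^j\qquad\text{for all } i\neq j.
\]

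It remains to deduce proportionality. Assume without loss of generality that $x$ is crossing, so $x^{i_1},x^{i_2}\neq 0$ for distinct $i_1,i_2$. I would first rule out any index $i_0$ with $x^{i_0},y^{i_0}$ linearly independent: for such $i_0$ the tensor $x^{i_0}\otimes y^{j}$ lies in $\langle x^{i_0}\rangle\otimes H^1(S_g)$ while $y^{i_0}\otimes x^{j}$ lies in the transverse subspace $\langle y^{i_0}\rangle\otimes H^1(S_g)$, so the equation forces $x^{j}=y^{j}=0$ for all $j\neq i_0$, contradicting that $x$ has two nonzero components. Hence each pair $(x^i,y^i)$ is linearly dependent, and I may write $x^i=s_iv_i$, $y^i=t_iv_i$ with $v_i$ spanning the line $\langle x^i,y^i\rangle$. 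Substituting into the equation yields $s_it_j=t_is_j$ whenever $v_i,v_j\neq 0$, so the nonzero vectors $(s_i,t_i)\in\mathbb{Q}^2$ are pairwise proportional and hence all lie on one line; choosing $(\lambda,\mu)$ that annihilates this line gives $\lambda x=\mu y$, as desired.

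The step I expect to be the main obstacle is the rank argument, not for its length but because it is the only place the geometry enters: everything hinges on $\operatorname{rank}M_{i,j}=2g$ exceeding $2$, which is the non-degeneracy of the intersection form and the reason the statement must fail at $g=1$. The remaining linear algebra is bookkeeping, with the crossing hypothesis serving only to exclude the degenerate solutions supported on a single index.
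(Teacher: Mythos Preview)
Your proof is correct and follows essentially the same approach as the paper: expand through $C_n$, use the kernel description to isolate the $H^1_i\otimes H^1_j$ component as $x^i\otimes y^j-y^i\otimes x^j=c_{ij}M_{i,j}$, argue $c_{ij}=0$ from $g>1$, and then deduce proportionality by elementary linear algebra under the crossing hypothesis. Your rank argument (left side has tensor rank $\le 2$, while $M_{i,j}$ has rank $2g>2$) is a crisper formulation of what the paper phrases as ``not enough basis to span $M_{i,j}$,'' but the logic and structure are the same.
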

\begin{proof}The multiplication of $x$ and $y$ is the following:
\[
x\smile y=x^1\smile y^1+...+x^n\smile y^n+\sum_{i\neq j}(x^i\otimes y^j-y^i\otimes x^j)\in \bigoplus_{i=1}^{n}\mathbb{Q}p_i^*[S_g]\oplus \bigoplus_{i\neq j} H_i\otimes H_j\xrightarrow{Pr} H^2(\text{PConf}_n(S_{g});\mathbb{Q}).
\]
By $x\smile y=0\in H^2(\text{PConf}_n(S_g);\mathbb{Q})$ and exact sequence (\ref{EX2}), we have the following equality in $\bigoplus_{i=1}^{n}\mathbb{Q}p_i^*[S_g]\oplus  \bigoplus_{i\neq  j}H_i\otimes H_j$:
\[x^1\smile y^1+...+x^n\smile y^n+\sum_{i\neq j}(x^i\otimes y^j-y^i\otimes x^j)=\sum k_{i,j}[\triangle_{i,j}]=\sum k_{i,j}(p_i[S_g]+p_j[S_g]+M_{i,j}).\]

By the independence of all the terms in $\mathbb{Q}^n\oplus  \bigoplus_{i\neq  j}H_i\otimes H_j$, we have 
\[x^i\otimes y^j-y^i\otimes x^j=k_{i,j}M_{i,j}\text{     for all $i,j$}.\]

If $x^i$ and $y^i$ are proportional in $H_i$, since $g>1$, we have at least 4 terms in $M_{i,j}$, we don't have enough basis to span our $M_{i,j}$. If $x^i$ and $y^i$ are independent in $H_i$, since $g>1$, we have $x^i\otimes y^j-y^i\otimes x^j\neq M_{i,j}$. Therefore $k_{i,j}=0$
and $x^i\otimes y^j-y^i\otimes x^j=0\in H_i\otimes H_j$. Assume without loss of generality that $x$ a crossing element and $x_1\neq 0$ and $x_2\neq 0$. We break the proof into the following cases.

{\bf\boldmath Case 1) $y^1\neq 0$ and $y^1$ is not proportional to $x^1$}

$x^1\otimes y^j=y^1\otimes x^j\in H_1\otimes H_j$ implies that $y^j=0$ and $x^j=0$ for $j$. However $x_2\neq 0$. Therefore this case is invalid.

{\bf \boldmath Case 2) $y^1\neq 0$ and $\lambda x^1=\mu y^1$}

$x^1\otimes y^j=y^1\otimes x^j\in H_1\otimes H_j$ implies that $\lambda x^j=\mu y^j$ for all $j$, which verifies oi=1ur lemma that $x$ and $y$ are proportional.

{\bf \boldmath Case 3) $y^1=0$}

$x^1\otimes y^j=y^1\otimes x^j\in H_1\otimes H_j$ implies that $y^j=0$ for all $j$. This means $y=0$ therefore $x$ and $y$ are also proportional.
\end{proof}

\subsection{The proof of Theorem \ref{sur}}
In this subsection, we use the computation of $H^*(\text{PConf}_n(S_g);\mathbb{Q})$ and Lemma \ref{crossing} to prove Theorem \ref{sur}. Let $p_{i*}:PB_n(S_g)\to \pi_1(S_g)$ be the induced map on the fundamental groups of $p_i:\text{PConf}_n(S_g)\to S_g$.
\begin{lem}
Let $F_h$ be a free group of $h$ generators and let $S_r$ be a surface of genus $r$.
If we have a surjective homomorphism $PB_n(S_g)\xrightarrow{S} \Gamma$ when $\Gamma=F_h$ with $h>1$ or $\Gamma=\pi_1(S_r)$ with $r>1$, and we also have $p_i^*(H^1(S_g;\mathbb{Q}))\cap S^*(H^1(\Gamma;\mathbb{Q}))\neq \{0\}$, then $S$ factors through $p_{i*}$ for some $i$.
\label{noint}
\end{lem}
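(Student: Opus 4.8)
The plan is to argue in two stages. \emph{Stage one} is cohomological: I will show that the subspace $V:=S^*(H^1(\Gamma))\subseteq H^1(\mathrm{PConf}_n(S_g))=\bigoplus_{k=1}^{n}H^1_k$ (the decomposition of Lemma \ref{later}) is contained in the single summand $H^1_i$, where $i$ is the index furnished by the hypothesis. \emph{Stage two} is group-theoretic: I will upgrade this containment to an honest factorization $S=\psi\circ p_i$. Since $S$ is surjective, $S_*$ is onto on $H_1$, so $S^*$ is injective on $H^1$; fix $0\ne x=S^*x_0\in V\cap H^1_i$, which exists by hypothesis.

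The local statement driving Stage one is: \emph{if $z\in V$ satisfies $x\cup z=0$, then $z\in H^1_i$.} Indeed, $z$ cannot be a crossing element, since Lemma \ref{crossing} would force such a $z$ to be proportional to the non-crossing $x$; hence $z\in H^1_j$ for a single $j$, and if $j\ne i$ then $x+z$ is crossing while $(x+z)\cup x=z\cup x=0$ (using $x\cup x=0$ and $x\cup z=0$), so Lemma \ref{crossing} gives $x+z\propto x$, i.e.\ $z\propto x$, forcing $z=0$. The force of Lemma \ref{crossing} here is exactly that $g>1$ makes each $M_{k,l}$ of rank $\ge 4$, so no pure off-diagonal tensor can be absorbed by a relation $R_{k,l}$. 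When $\Gamma=F_h$ with $h>1$ we have $H^2(\Gamma)=0$, hence $x\cup z=S^*(x_0\cup z_0)=0$ for \emph{every} $z\in V$, and the local statement gives $V\subseteq H^1_i$ at once. When $\Gamma=\pi_1(S_r)$ with $r>1$, write $\omega$ for a generator of $H^2(\Gamma)\cong\mathbb{Q}$, so that $a\cup b=\langle a,b\rangle\,\omega$ is the nondegenerate symplectic form on $H^1(\Gamma)$. If $S^*\omega=0$ we again conclude as in the free case; otherwise $W:=\{z\in V:x\cup z=0\}=S^*(x_0^{\perp})$ has codimension one in $V$ and lies in $H^1_i$ by the local statement. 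To place the remaining direction $\tilde y$ (with $V=W\oplus\mathbb{Q}\tilde y$), pick a nonzero $w\in W$ with $\langle\tilde y_0,w_0\rangle=0$, possible since $\dim W=2r-1\ge 3$; then $\tilde y\cup w=0$ and $(\tilde y+w)\cup w=0$, and two further applications of Lemma \ref{crossing} show first that $\tilde y$ is non-crossing and then that it lies in $H^1_i$. Thus $V\subseteq H^1_i$ in every case.

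For Stage two, recall that the forgetful homomorphism $p_i\colon PB_n(S_g)\to\pi_1(S_g)$ is induced by the fibration $\mathrm{PConf}_n(S_g)\to S_g$ forgetting all points but the $i$th, whose fiber $\mathrm{PConf}_{n-1}(S_g\setminus\{x_i\})$ is aspherical; hence $K:=\ker(p_i)$ is the finitely generated fundamental group of that fiber and $PB_n(S_g)/K\cong\pi_1(S_g)$. Since $H^1_i=p_i^*H^1(S_g)$ restricts to zero on the fiber, the containment $V\subseteq H^1_i$ forces $(S|_K)^*=0$ on $H^1(\,\cdot\,;\mathbb{Q})$; as $\Gamma^{\mathrm{ab}}$ is torsion-free this gives $S(K)\subseteq[\Gamma,\Gamma]$. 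Now $S(K)$ is finitely generated (a quotient of $K$), normal in $\Gamma$ (the image of a normal subgroup under the surjection $S$), and of infinite index (contained in the commutator subgroup). A finitely generated normal subgroup of infinite index in a free group of rank $>1$, or in a closed surface group of genus $>1$, is trivial; therefore $S(K)=1$ and $S$ descends to $PB_n(S_g)/K\cong\pi_1(S_g)$, i.e.\ $S=\psi\circ p_i$ for some $\psi\colon\pi_1(S_g)\to\Gamma$, the asserted factorization.

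The step I expect to be the main obstacle is the surface-group case of Stage one: there the cup products on $V$ do not all vanish, so $V\subseteq H^1_i$ must be extracted from the codimension-one subspace $W$ together with careful bookkeeping of crossing elements through Lemma \ref{crossing}, and one must correctly treat the degenerate possibility $S^*\omega=0$. The second point requiring care is that the normal-subgroup dichotomy in Stage two genuinely applies, i.e.\ that $S(K)$ is finitely generated and of infinite index; these follow from finite generation of configuration-space fundamental groups and from $S(K)\subseteq[\Gamma,\Gamma]$, but they are hypotheses not to be skipped. By comparison, the free-group case and the passage of Stage two are comparatively clean.
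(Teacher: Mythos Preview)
Your argument is correct, but it takes a substantially longer route than the paper's. The paper never proves your Stage one claim that $V=S^*(H^1(\Gamma))\subseteq H^1_i$; it uses only the \emph{single} common class furnished by the hypothesis. Concretely: the nonzero intersection gives $x_0\in H^1(\Gamma;\mathbb{Q})$ and $y_0\in H^1(S_g;\mathbb{Q})$ with $x_0\circ S=y_0\circ p_i$ as homomorphisms $PB_n(S_g)\to\mathbb{Q}$. With $K=\ker p_i$ finitely generated and $S(K)\lhd\Gamma$ finitely generated, the normal-subgroup dichotomy (exactly as in your Stage two) says $S(K)$ is trivial or of finite index. Finite index is impossible because then $x_0$ would restrict nontrivially to $S(K)$ (a nonzero homomorphism to $\mathbb{Q}$ cannot factor through a finite quotient), whereas $x_0(S(K))=y_0(p_i(K))=0$. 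That's the whole proof. Your Stage one, with its case split $S^*\omega=0$ versus $S^*\omega\neq0$ and repeated invocations of Lemma~\ref{crossing}, establishes the stronger containment $V\subseteq H^1_i$, from which you deduce $S(K)\subseteq[\Gamma,\Gamma]$ and hence infinite index; this is valid but unnecessary, since a single class already pins $S(K)$ inside the kernel of a nonzero character. In effect you have folded part of the proof of Theorem~\ref{sur} (the crossing-element analysis) into the proof of this lemma, whereas the paper keeps the two cleanly separated: Lemma~\ref{noint} handles the case where $V$ meets some $H^1_i$, and only afterwards does Lemma~\ref{crossing} rule out the complementary case where $V$ consists entirely of crossing elements.
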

\begin{proof}
The proof of this lemma uses the same idea as \cite{FEAJohnson}. The method can also be found in \cite[Lemma 3.3 and 3.4]{NickFibering}. If there is a common nonzero cohomology element $S^*(x)=p_{i*}^*(y)$ for $x\in H^1(F_h;\mathbb{Q})$ and $y\in H^1(\pi_1(S_g);\mathbb{Q})$, we have the following commutative diagram by the identification $H^1(\_\_;\mathbb{Q})\cong \text{Hom}(\_\_,\mathbb{Q})$.
\[
\xymatrix{
PB_n(S_g)\ar[r]^-S\ar[d]^{p_{i*}} & F_h\ar[d]^x\\
\pi_1(S_g)\ar[r]^y & \mathbb{Q}}
\]
Let $K$ be the kernel of $p_{i*}$, which is a finitely generated normal subgroup of $PB_n(S_g)$. The image of $S(K)$ is also a finite generated normal subgroup of $\pi_1(F_h)$. However every finitely generated normal subgroup of $F_h$ either is finite index or is trivial. For a surface group of genus $r$ case, any nontrivial finitely-generated normal subgroup of $\pi_1(S_r)$ has finite index; see Property (D6) in \cite{FEAJohnson}. If $S(K)\subset F_h$ has finite index, then after composing with $x$, the image $x\circ S(K)$ won't be trivial in $\mathbb{Q}$; however $K$ is the kernel of $p_{i*}$ so $x\circ S(K)=y\circ p_{i*}(K)=1$.

If the $S(K)=1$, then $S$ factors through $p_{i*}$.\end{proof}

To prove Theorem \ref{sur}, we have to include a lemma talking about the possible image of the homomorphism.
\begin{lem}
Every finitely generated subgroup of $\pi_1(S_g)$ is either finitely generated free group $F_h$ or surface group $\pi_1(S_r)$ with $r\ge g$. When $r=g$, the subgroup is the whole group $\pi_1(S_g)$.
\label{possible}
\end{lem}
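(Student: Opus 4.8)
The plan is to pass from algebra to topology via covering space theory. Given a finitely generated subgroup $H\le \pi_1(S_g)$, I would let $p\colon \widetilde{S}\to S_g$ be the connected covering space corresponding to $H$, so that $p_*\colon \pi_1(\widetilde{S})\xrightarrow{\cong} H$. Since $S_g$ is a closed orientable surface without boundary, $\widetilde{S}$ is again a connected orientable surface without boundary, and the whole classification reduces to understanding $\pi_1(\widetilde{S})$ according to whether $\widetilde{S}$ is compact or not. The dichotomy in the lemma will correspond exactly to this topological dichotomy.

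First I would dispose of the noncompact case. The cover $\widetilde{S}$ is compact if and only if $p$ is finite-sheeted, i.e. if and only if $[\pi_1(S_g):H]<\infty$; so if $H$ has infinite index then $\widetilde{S}$ is noncompact. The classical fact I would invoke is that every connected noncompact surface without boundary deformation retracts onto a graph (a $1$-complex spine), and hence has \emph{free} fundamental group. Because $H$ is assumed finitely generated, this free group is $F_h$ for some finite $h\ge 0$, which settles the free case.

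Next the compact case: if $[\pi_1(S_g):H]=d<\infty$, then $\widetilde{S}$ is a closed orientable surface $S_r$ and $H\cong \pi_1(S_r)$ is a surface group. To pin down $r$ I would use multiplicativity of the Euler characteristic under a $d$-fold cover: $\chi(S_r)=d\,\chi(S_g)$, that is $2-2r=d(2-2g)$, which gives $r-1=d(g-1)$ and hence $r=1+d(g-1)$. Since $g>1$ forces $g-1\ge 1$, for every $d\ge 1$ we obtain $r\ge g$, with equality precisely when $d=1$. Finally, $d=1$ means $p$ is a homeomorphism, i.e. $H=\pi_1(S_g)$, which is exactly the asserted statement that $r=g$ forces the subgroup to be the whole group.

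The only genuinely nontrivial input here is the assertion that a noncompact surface without boundary has free fundamental group; I expect this to be the main point to justify with care (rather than to discover), and I would either cite the classification of open surfaces or construct an explicit spine onto which $\widetilde{S}$ deformation retracts. Everything else—the correspondence between subgroups and covers, the equivalence of finite index with compactness of the cover, and the Euler-characteristic bookkeeping—is routine and requires no new ideas.
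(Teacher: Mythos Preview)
Your proposal is correct and follows essentially the same argument as the paper: pass to the covering space corresponding to the subgroup, split into the noncompact case (free fundamental group) and the compact case (closed surface), and in the latter use multiplicativity of Euler characteristic to get $r\ge g$ with equality only when the cover is trivial. Your write-up is in fact more careful than the paper's, which merely sketches the same steps.
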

\begin{proof}
A subgroup $G$ of $\pi_1(S_g)$ corresponds to a cover $S$ of $S_g$ such that $G=\pi_1(S)$. If $S$ is noncompact, then $\pi_1(S)$ is free group. If $S$ is compact, it is a finite cover. Therefore $\pi_1(S)=S_r$ for some $r$. If $S$ is a $k$-cover. The Euler characteristic is multiplicative under cover, thus $\chi(S_r)=k\chi(S_g)$. If $g>1$ and $k>1$, we have $r>g$. If $n=1$, this is trivial cover.
\end{proof}

\begin{proof}[Proof of Theorem \ref{sur}] 
Let  $R:PB_n(S_g)\xrightarrow{} \pi_1(S_g)$ be a homomorphism. By Lemma \ref{possible}, if Im$(R)\cong \mathbb{Z}$, the image has to be $F_h$ with $h>1$ or $\pi_1(S_r)$ with $r\ge g$. Furthermore, if $S$ does not factor through $p_{i*}$ for some $i$, then by Lemma \ref{noint}, $S^*(H^1(\text{Im}(R);\mathbb{Q}))$ does not intersect nontrivially with any $H_i$. This means that all nonzero elements of $S^*(H^1(\text{Im}(R);\mathbb{Q}))$ are crossing elements. However $r\ge g>1$ and $h>1$ mean that there are two crossing elements $x$ and $y$ in $S^*(H^1(\text{Im}(R);\mathbb{Q}))$ that are independent and their cup product is zero. Lemma \ref{crossing} tells us that this is impossible, which successfully proves 2) of Theorem \ref{sur}.

Now to prove 1), we have a surjection homomorphism $PB_n(S_g)\xrightarrow{p_{i*}} \pi_1(S_g)\xrightarrow{A} \pi_1(S_g)$. However surface groups are Hopfian which means that a surjective self homomorphism between the surface group $\pi_1(S_g)$ must be an automorphism. Therefore $A$ is an automorphism, which concludes the proof of 1) in Theorem \ref{sur}. 

\end{proof}

\section{Applications of Theorem \ref{sur}}
In this section, we apply Theorem \ref{sur} to the study of section problems of universal surface bundles. 
\subsection{The proof of Theorem \ref{main3}}
Since we already established all possible homomorphisms $R$ in Theorem \ref{sur}, the key idea of extending the homomorphism to $\text{PMod}_{g,n}$ is that it has to be equivariant with the action of $\text{Mod}_g$. We then use homology to rule out other possibilities.

\begin{defn}
Let a subspace $H\subset H^1(\text{PConf}_n(S_g);\mathbb{Q})\cong \bigoplus_{i=1}^{n} H_i$ be an \emph{isotropic subspace} if any $a,b\in H$, we have $a\smile b=0\in H^2(\text{PConf}_n(S_g);\mathbb{Q})$.
\end{defn}

The following lemma is needed in the proof.
\begin{lem}
Mod$_g$ does not fix any isotropic subspace of $H^1(\text{PConf}_n(S_g);\mathbb{Q})$.
\end{lem}
\begin{proof}
If there exists a crossing element $x\in H$, because of Lemma \ref{crossing}, we know that $x\smile y=0$ if and only if $y$ is proportional to $x$. Therefore if $H$ is isotropic, $H=\mathbb{Q}x\subset H^1(\text{PConf}_n(S_g);\mathbb{Q})$.

If dim$(H)>1$, then $H$ does not contain crossing elements by Lemma \ref{crossing}. In this case, if there exist $x,y \in H$ and $i\neq j\in \{1,2,...,n\}$ such that $x\neq 0\in H_i$ and $y\neq 0\in H_j$, we would have $x+y$ a crossing element. Therefore there exists $i$ such that $H\subset H_i$.

Mod$_g$ acts on $H^1(\text{PConf}_n(S_g);\mathbb{Q})\cong \bigoplus_{i=1}^{n} H_i$ by acting on each component. We know that the action of Mod$_g$ on $H^1(S_g;\mathbb{Q})$ does not fix any isotropic subspace, therefore if $H\subset H_i$, Mod$_g$ does not fix $H$. If dim$(H)=1$, Mod$_g$ also does not fix it.
\end{proof}

Now we finish the proof of Theorem \ref{main3}.
\begin{proof}[\bf Proof of Theorem \ref{main3}]
If we can extend $R$, then for $e\in \text{PMod}_{g,n}$ and $f\in PB_n(S_g)$ we have $R(efe^{-1})=p(e)R(f)p(e)^{-1}$. The action of $\text{PMod}_{g,n}$ and $\text{Mod}_{g,n}$ on $PB_n(S_g)$ and $B_n(S_g)$ are given by conjugation in the exact sequence (\ref{BES1}) and (\ref{BES2}), respectively. Therefore we have a commutative diagram:
\[
\xymatrix{  PB_n(S_g)\ar[r]^{e}\ar[d]^R & PB_n(S_g)\ar[d]^R \\
\pi_1(S_g)\ar[r]^{{p(e)}} & \pi_1(S_g).}
\]

Since both $e$ and $p(e)$ are isomorphisms of groups, we have that $\text{Im}(R)=\text{Im}(R\circ e)$. This gives us the following diagram:
\begin{equation}
\xymatrix{  PB_n(S_g)\ar[r]^{e}\ar[d]^R & PB_n(S_g)\ar[d]^R \\
\text{Im}(R)\ar[r]^{{p(e)}} & \text{Im}(R).}
\label{Restricting}
\end{equation}

Because of Lemma \ref{possible}, we know that we have 4 possibilities for $\text{Im}(R)$: $F_h$ for $h=0$, $h>0$ and $\pi_1(S_r)$ for $r=g$ or $r>g$.
Now, we go over all possibilities.\\
\\
\noindent
{\large \bf Case 1) $\text{Im}(R)=1$}

In this case, we have a homomorphism $\text{PMod}_{g,n}/PB_n(S_g)=\text{Mod}_g\to \text{Mod}_{g,1}$. However, the $n=0$ case has already been proved, for example in \cite[Corollary 5.11]{BensonMargalit}.\\
\\
\noindent
{\large \bf Case 2) $\text{Im}(R)=F_h$, while $h>0$}

We have the following diagram.
\begin{equation}
\xymatrix{  PB_n(S_g)\ar[r]^e\ar[d]^R & PB_n(S_g)\ar[d]^R \\
F_h\ar[r]^{p(e)} & F_h}
\label{com}
\end{equation}
For every $e\in \text{PMod}_{g,n}$, diagram (\ref{com}) means that $R^*(H^1(F_h;\mathbb{Q}))\subset H^1(S_g;\mathbb{Q})$ has to be fixed under the action of $\text{Mod}_g$. This is impossible because $R^*(H^1(F_h;\mathbb{Q}))\subset H^1(\text{PConf}_n(S_g);\mathbb{Q})$ is an isotropic subspace of $H^1(\text{PCon}f_n(S_g);\mathbb{Q})$, but $\text{Mod}_g$ does not fix any isotropic subspace.\\
\\
\noindent
{\large \bf Case 3) $\text{Im}(R)=\pi_1(S_g)$}

If $R$ is one of the forgetful homomorphism $p_i$, then for $e\in \text{PMod}_{g,n}$ and $f\in PB_n(S_g)$,
\[
p_i(efe^{-1})=p(e)p_i(f)p(e)^{-1}.
\]
We get that $p_{i}(e)p_i(f)p_i(e)^{-1}=p(e)p_i(f)p(e)^{-1}$. Therefore $p(e)^{-1}p_i(e)$ commutes with $p_i(f)$ for any $f\in PB_n(S_g)$. The image of $p_i$ on $PB_n(S_g)$ is the whole group $\pi_1(S_g)$. Therefore, $p(e)^{-1}p_i(e)\in \text{Mod}_{g,1}$ commutes with the subgroup $\pi_1(S_g)$. However, the centralizer of $\pi_1(S_g)<Mod_{g,1}$ is $1$, so we get that $p(e)^{-1}p_i(e)=1\in \text{Mod}_{g,1}$. This tells us that $p=p_i$. 

If R is one of the forgetful homomorphism $p_i$ post-composing with an automorphism $A$, with a similar argument as above, we get that $p(e)= Ap_i(e)A^{-1}$. Considering that the images of $Ap_i(e)A^{-1}$ and $p_i(e)$ have to be equal in Mod$_g$ for any $e$, we have $Ap_i(e)=p_i(e)A$ for any $e\in \text{Mod}_g$. Therefore, we have $A\in \text{Center} (\text{Mod}_g)$. For $g>2$, $\text{Center}(\text{Mod}_g)=1$, therefore we have $A\in \pi_1(S_g)$. For $g=2$, we could have $A=\tau$.\\
\\
\noindent
{\large \bf Case 4) $\text{Im}(R)=\pi_1(S_r)$ while $r>g$}

Because of Lemma \ref{sur}, $R$ factors through $p_i$. However there is no surjective homomorphism from $\pi_1(S_g)\to \pi_1(S_r)$ since $\text{Rank}(H^1(S_r;\mathbb{Q}))>\text{Rank}(H^1(S_g;\mathbb{Q}))$.

\end{proof}

\subsection{The proof of Theorem \ref{main4}}
In this section, we begin with the proof of Theorem \ref{main4}

\begin{lem}
\[H^1(\text{Conf}_n(S_g);\mathbb{Q})\cong H^1(S_g;\mathbb{Q})\]
and the image of $H^1(\text{Conf}_n(S_g);\mathbb{Q})\to H^1(\text{PConf}_n(S_g);\mathbb{Q})$ is equal to the image of the diagonal map, i.e. $x\to (x,x,...,x)\in H^1(\text{PConf}_n(S_g);\mathbb{Q})\cong \bigoplus_{i=1}^n H_i$.
\end{lem}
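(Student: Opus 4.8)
The plan is to exploit the fact that $\text{PConf}_n(S_g)\to \text{Conf}_n(S_g)$ is a regular covering space whose deck transformation group is $\Sigma_n$ acting freely, together with the standing assumption that all cohomology is taken with $\mathbb{Q}$ coefficients. For a finite regular cover with deck group a finite group, the transfer argument shows that the pullback $c^*\colon H^*(\text{Conf}_n(S_g))\to H^*(\text{PConf}_n(S_g))$ is injective with image exactly the invariant subspace $H^*(\text{PConf}_n(S_g))^{\Sigma_n}$; indeed, the composite of $c^*$ with the transfer is multiplication by $n=|\Sigma_n|$, which is invertible over $\mathbb{Q}$. So it suffices to compute the degree-$1$ invariants and to recognize $c^*$ as the inclusion of those invariants.

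First I would recall from Lemma \ref{later} that $H^1(\text{PConf}_n(S_g))\cong \bigoplus_{i=1}^n H_i$, where $H_i=p_i^*(H^1(S_g))$ is the image of the pullback along the $i$-th forgetful projection $p_i\colon \text{PConf}_n(S_g)\to S_g$. Next I would determine the $\Sigma_n$-action on this direct sum. Since $\sigma\in \Sigma_n$ permutes the ordered points, it intertwines the projections via $p_i\circ\sigma=p_{\sigma(i)}$, and hence carries $H_i$ isomorphically onto $H_{\sigma(i)}$; moreover it does so by the same identification of each $H_i$ with $H^1(S_g)$, because the $p_i$ differ only in which coordinate they remember. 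Thus the action of $\Sigma_n$ on $\bigoplus_{i=1}^n H_i\cong H^1(S_g)^{\oplus n}$ is precisely the permutation representation permuting the $n$ summands.

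The final step is the elementary computation that the $\Sigma_n$-invariants of the permutation module $V^{\oplus n}$, with $V=H^1(S_g)$, form exactly the diagonal copy $\{(x,x,\dots,x):x\in V\}\cong V$. Combining this with the transfer description of $c^*$ yields both assertions simultaneously: $H^1(\text{Conf}_n(S_g))\cong H^1(S_g)$, and the image of $c^*$ is the diagonal $x\mapsto(x,\dots,x)$ inside $\bigoplus_{i=1}^n H_i$.

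The step I expect to be the main obstacle is the careful verification that the $\Sigma_n$-action on $\bigoplus_i H_i$ really is the naive permutation action, with no sign twist or extra automorphism coming from how each summand $H_i$ is embedded in $H^1(\text{PConf}_n(S_g))$. This requires unwinding the identification in Lemma \ref{later} and checking that the projection structure is genuinely $\Sigma_n$-equivariant. Everything else, namely the transfer isomorphism over $\mathbb{Q}$ and the invariants of a permutation module, is standard.
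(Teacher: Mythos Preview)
Your argument is correct and follows exactly the paper's approach: use the transfer for the finite cover $\text{PConf}_n(S_g)\to \text{Conf}_n(S_g)$ to identify $H^1(\text{Conf}_n(S_g))$ with $H^1(\text{PConf}_n(S_g))^{\Sigma_n}$, and then observe that the $\Sigma_n$-invariants of $\bigoplus_i H_i$ form the diagonal copy of $H^1(S_g)$. Your worry about a possible twist in the $\Sigma_n$-action is easily dispelled by the identity $p_i\circ\sigma=p_{\sigma(i)}$, which you already noted; the paper simply asserts this step as ``not hard to see.''
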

\begin{proof}
Since $\text{Conf}_n(S_g)=\text{PConf}_n(S_g)/\Sigma_n$, we can use the transfer map to get 
\[H^1(\text{Conf}_n(S_g);\mathbb{Q})=H^1(\text{PCon}f_n(S_g);\mathbb{Q})^{\Sigma_n}.\]
 It is not hard to see that $H^1(\text{PConf}_n(S_g);\mathbb{Q})^{\Sigma_n}$ is the diagonal subspace.
\end{proof}

\begin{proof}[\bf Proof of Theorem \ref{main4}]
If we have a homomorphism $p'$ in the diagram in Theorem \ref{main4}, after composing an injection $\text{PMod}_{g,n}\xrightarrow{i} \text{Mod}_{g,n}$, we get a homomorphism $p$ as in Theorem \ref{main3}. Let $C_A$ denote the conjugate by $A\in\pi_1(S_g)$. We have already proved Theorem \ref{main3} that $p'\circ i=C_A\circ p_i \in \pi_1(S_g)$ for some $i$ and $A\in \pi_1(S_g)$. Restricting to the kernel of $\pi_{g,n}$ and $\pi_{g,n}'$, the following diagram holds.
\[
\xymatrix{ 
PB_n(S_g)\ar[r]^{i}\ar[d]^{p_i}& B_n(S_g)\ar[d]^{R'} \\
\pi_1(S_g)\ar[r]^{C_A} & \pi_1(S_g)}
\]

The image of $H^1(S_g;\mathbb{Q})\xrightarrow{(C_A\circ p_i)^*} H^1(\text{PConf}_n(S_g);\mathbb{Q})$ is $H_i$; however the image of $H^1(\text{Conf}_n(S_g);\mathbb{Q})\to H^1(\text{PConf}_n(S_g);\mathbb{Q})$ as described in the previous lemma is the diagonal. Thus this is a contradiction.
\end{proof}

\subsection{The hyperelliptic case}
In this subsection, we prove Proposition \ref{HP}. The proof follows the same argument as the proof of Theorem \ref{main3}. The following lemma is a key ingredient in the proof.
\begin{lem}
The action of ${\cal H}_g$ on $H^1(S_g;\mathbb{Q})$ does not preserve any isotropic subspace.
\label{fact}
\end{lem}
\begin{proof}
Let Sp$_{2g}(\mathbb{Z})[m]$ be the kernel of the map Sp$_{2g}(\mathbb{Z})\to Sp_{2g}(\mathbb{Z}/m)$. By \cite[Theorem 3.3]{brendle2015level}, the image of the monodromy representation $\rho_{s}:{\cal H}_g\to \text{Sp}_{2g}(\mathbb{Z})$ contains Sp$_{2g}(\mathbb{Z})[2]$. Since Sp$_{2g}(\mathbb{Z})[2]$ is a finite index subgroup of Sp$_{2g}(\mathbb{Z})$, we only need to show that Sp$_{2g}(\mathbb{Z})[2]$ does not preserve any isotropic subspace. We prove a stronger result that the stabilizer of any isotropic subspace of $H^1(S_g;\mathbb{Q})$ has infinite index in Sp$_{2g}(\mathbb{Z})$.

For an isotropic subspace $H\subset H^1(S_g;\mathbb{Q})$, let $\text{Stab}_H(\text{Sp}_{2g}(\mathbb{Z}))$ be the stabilizer of $H$ in $\text{Sp}_{2g}(\mathbb{Z})$ and let $\text{Orb}_H(\text{Sp}_{2g}(\mathbb{Z}))$ be the orbit of $H$ under the action of $\text{Sp}_{2g}(\mathbb{Z})$. We have the following equation:
\[
[\text{Sp}_{2g}(\mathbb{Z}):\text{Stab}_H(\text{Sp}_{2g}(\mathbb{Z}))]\cong \text{Orb}_H(\text{Sp}_{2g}(\mathbb{Z})).
\]
Since the order of the $\text{Orb}_H(\text{Sp}_{2g}(\mathbb{Z}))$ is infinite, we have that $\text{Stab}_H(\text{Sp}_{2g}(\mathbb{Z}))$ has infinite index in $\text{Sp}_{2g}(\mathbb{Z})$. This concludes the proof since the $\rho_s({\cal H}_g)\subset \text{Sp}_{2g}(\mathbb{Z})$ has finite index. 

\end{proof}
The proof is identical to the proof of Theorem \ref{main3} with the help of Lemma \ref{fact}. 
\begin{proof}[\bf The proof of Proposition \ref{HP}]
The proof of Case 1), 3), and  4) are the same. Case 2) needs the fact that ${\cal PH}\pi_{g,n}$ does not preserve any isotropic subspace in $H^1(S_g;\mathbb{Q})$ which can be deduced by Lemma \ref{fact} that  ${\cal H}\pi_g$ does not preserve any isotropic subspace in $H^1(S_g;\mathbb{Q})$. 
\end{proof}

\bibliography{citing}{}
\vspace{5mm}
\hfill \break
Dept. of Mathematics, University of Chicago

E-mail: chenlei@math.uchicago.edu

\end{document}